\definecolor{shadecolor}{gray}{0.875}
\renewcommand{\P}{\mathbb{P}}
\renewcommand{\O}{\mathcal{O}}
\newcommand{\R}{\mathbb{R}}
\newcommand{\I}{\mathcal{I}}
\newcommand{\C}{\mathbb{C}}
\newcommand{\ra}{\rightarrow}
\newcommand{\ol}{\overline}
\newcommand{\vol}{\mathrm{vol}}
\newcommand{\W}{W}
\newcommand{\J}{\mathcal{J}}
\def\PP{X}
\def\pp{X}
\def\Z{{\mathbb Z}}
\def\Kp1{{{\rm K}}_{p,1}(\mathbb P^n,b;d)}
\def\Bp1{{{\mathbb K}}_{p,1}(b;d)}
\def\vol{{\rm vol}}
\numberwithin{equation}{section} 
\newtheorem*{thm1}{Theorem}
\newtheorem{thm}{Theorem}[section]
\newtheorem{prop}[thm]{Proposition}
\newtheorem{lem}[thm]{Lemma}
\newtheorem{exap}[thm]{Example}
\theoremstyle{definition}
\newtheorem{rmk}[thm]{Remark}
\newtheorem{defn}[thm]{Definition}
\title{Asymptotic Weights of Syzygies of Toric Varieties}
\author{Xin Zhou}
\begin{document}

\maketitle

\section{Introduction}

The purpose of the paper is to give a sharp asymptotic description of the weights that appear in the syzygies of a toric variety. We prove that as the positivity of the embedding increases, in any strand of syzygies, torus weights after normalization stabilize to the same fixed shape that we explicitly specify. 

Let $X$ be a projective toric variety over $\C$ of dimension $n$, and $L$ be a very ample toric line bundle on $X$. Then $L$ defines a toric embedding:
\[X \hookrightarrow \P^{r(L)}  = \P H^0(X,L) = \textnormal{Proj} \  S\]
where $r(L) = h^0(X,L) - 1$ and $S = \textnormal{Sym} H^0(X,L)$.
Write:
\[R(X;L) = \bigoplus_m H^0(X, mL )\]
which is viewed as a finitely generated graded $S$-module. We will be interested in the syzygies of $R(X;L)$ over $S$. Specifically, $R$ has a graded minimal free resolution
\[\mathbb{F}: ... \ra F_p \ra ... \ra F_0 \ra R \ra 0\]
where $F_p = \oplus_j S(-a_{p,j})$ is a free $S$-module.  Write $K_{p,q}(X;L)$ for the finite dimensional vector space of minimal $p$-th syzygies of degree $(p+q)$, so that:
\[F_p \cong \bigoplus_q K_{p,q}(X;L) \otimes_{\mathbb{C}} S(-p-q)\] 
Moreover, in the above setting, the torus action on $X$ induces torus actions on $K_{p,q}(X;L)$. We can naturally ask which torus weights appear in their decompositions. 

From an asymptotic perspective, Ein and Lazarsfeld show in \cite{EL} that for $1 \leq q \leq n$, if $L \gg 0$, $K_{p,q}(X;L) \neq 0$ for almost all $p \in [1, r_d]$. 
In this paper, we give a sharp description of the asymptotic distribution of normalized torus weights in syzygies. To give the statement, let $\Delta$ be the convex polytope associated to the very ample divisor $A$ (\cite{Ft}, Section 3.4, p66, $P_A$ in notation of the book.) Let $L_d = A^{\otimes d}$. Then by degree counting, the torus weights of $K_{p,q}(\PP;L_d)$ correspond to integral points in $(p+q)d \cdot \Delta$. Denote the collection of weights by:
\[  \mathrm{wts}(K_{p,q}(\PP;L_d)) = \{ \mathrm{Torus \ weights \ of \ }K_{p,q}(\PP;L_d) \} \subseteq (p+q)d \cdotp \Delta\] 
We normalize so that all points lie in $\Delta$: 
\[  \mathrm{wts^{nor}} (K_{p,q}(\PP;L_d)) = \frac{\mathrm{wts}(K_{p,q}(\PP;L_d)) }{(p+q)d} \subseteq \Delta\]

\noindent We show the philosophy that asymptotic syzygies are complicated by proving that as $d \ra \infty$, the set of all normalized torus weights becomes dense in $\Delta$:
\begin{thm1} [\ref{dense}]
Fix $1 \leq q \leq n$, then
\begin{equation*}
\bigcup_{\substack{d \ > \ 0 \\ 1 \ \leq \ p \ \leq \  r_d}} \mathrm{wts^{nor}}(K_{p,q}(\PP;L_d))
\end{equation*}
is dense in $\Delta$.
\end{thm1}
The Theorem is illustrated in Figure \ref{dense weights}, which approximates $K_{p,1}(\PP^2,d)$ for $d = 2$ and $d = 4$. 
\bigskip
\begin{figure}[here]
\includegraphics[scale = 0.32]{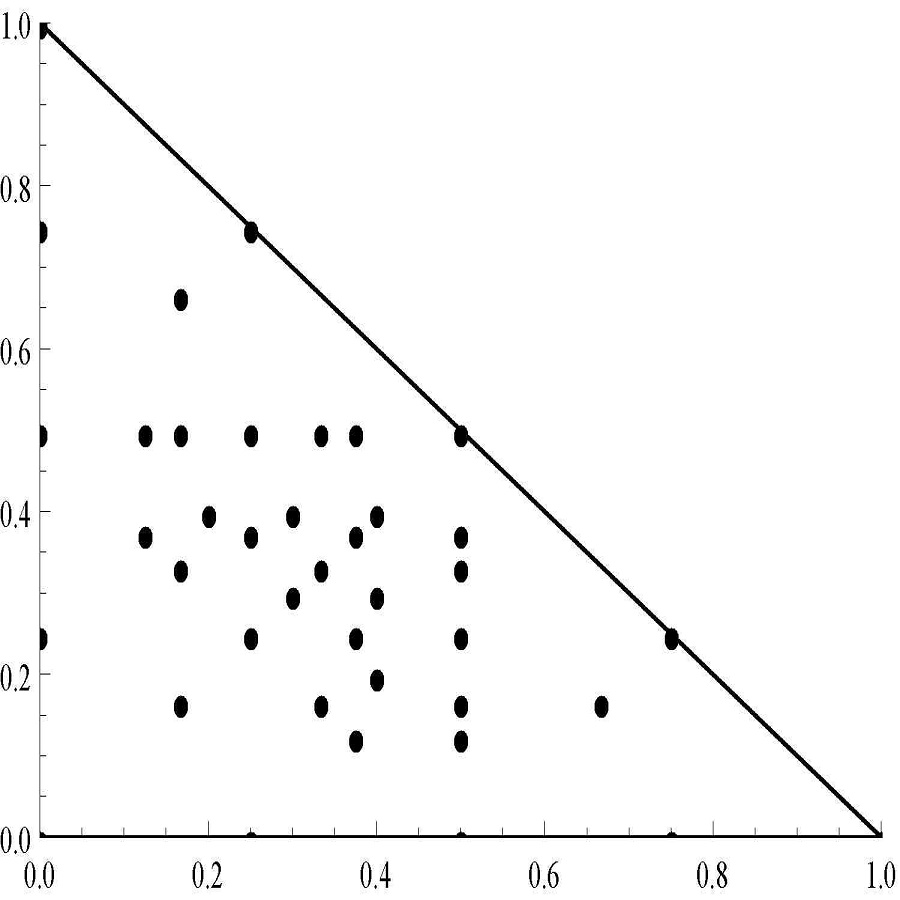} \quad \quad \quad
\includegraphics[scale = 0.32]{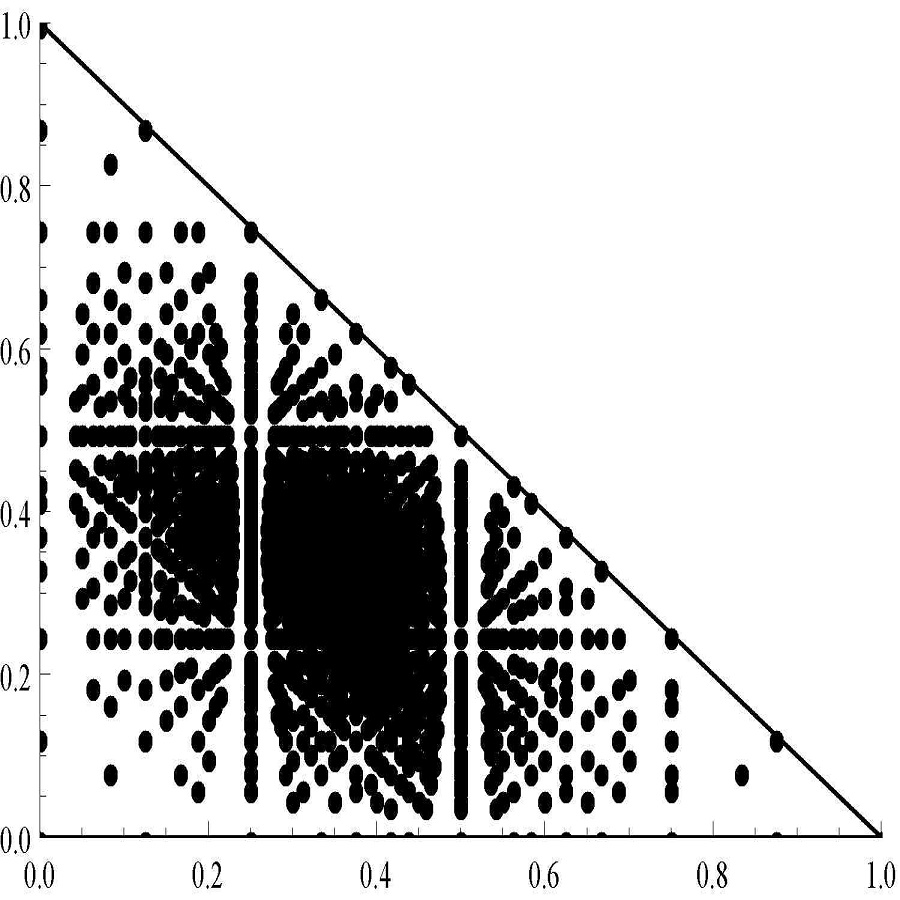} 
\caption{Normalized torus weights for $K_{p,1}( \PP^2;d)$ for d = 2 and d = 4.}
\label{dense weights}
\end{figure}

We can also ask what happens if we focus only on some of the syzygies appearing in the resolution. Is their behavior still as complicated? More specifically, restrict $p$ to lie in a fixed interval relative to $r_d$, i.e. consider:
\[ \Delta(a, b) \  = \overline{\bigcup_{\substack{d \  \gg \  0 \\ a \cdot r_d \ \leq \  p \ \leq \  b \cdot  r_d}} \mathrm{wts^{nor}}(K_{p,q}(\PP;L_d))}\subseteq \ \Delta \]
where $0 \leq a < b \leq 1$. These are no longer necessarily dense inside all of $\Delta$. Figure \ref{not dense} shows the normalized weights of $K_{p,1}(\PP^2;4)$ for $a = 0.33,b = 0.66$ and $a = 0.66, b = 1$:

\begin{figure}[here]
\includegraphics[scale = 0.32]{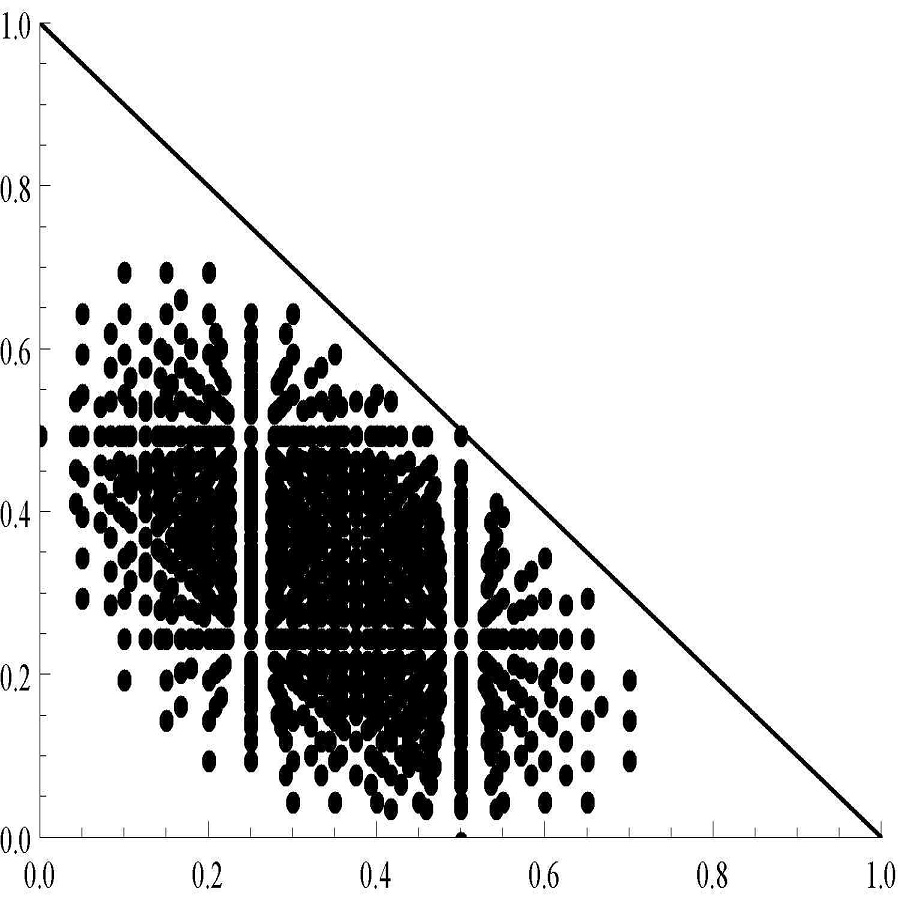} \quad \quad \quad
\includegraphics[scale = 0.32]{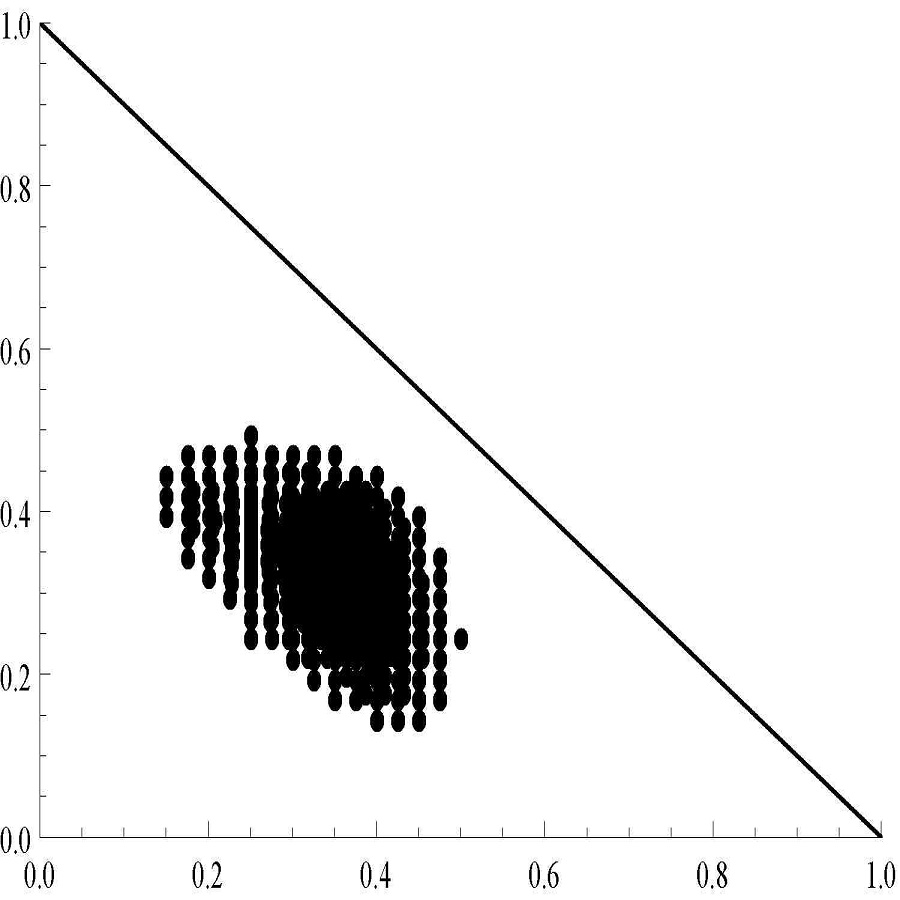} 
\caption{Closure of normalized weights for $a = 0.33,\  b = 0.66$ and $a = 0.66, \ b = 1$ with $\PP = \PP^2, \ d = 4$.}
\label{not dense}
\end{figure}

Quite surprisingly, we can explicitly describe this set. The description involves the largest volume of a "nice" region supported at $x$. More precisely, define:
$$\tau_x = \mathrm{sup} \ \{ \vol(S_x)\  | \ S_x = \mathrm{\ finite \  union \  of \  cubes} \subset \Delta, \mathrm{\ and \ center \ of \ mass \ of} \ S_x = x\}$$
\begin{thm1}[\ref{delta a b}] One has:
$$\Delta(a,b)  = \left \{x \in \Delta \  \bigg| \  \frac{\tau_x}{\vol(\Delta)} \geq a \right \} =: \Delta(a)$$ 
\end{thm1}

Note that part of the statement of the theorem is that $\Delta(a,b)$ does not depend on $b$, so we write $\Delta(a)$ for it. The boundary of $\Delta(a)$ is also explicitly computable. For example, let $\Delta$ be the unit square. Then boundary of $\Delta(\frac{1}{10})$ consists of 12 pieces, 4 segments of hyperbolas at the corners and 8 line segments in between as illustrated below. 

\begin{figure}[here]
\includegraphics[scale = 0.50]{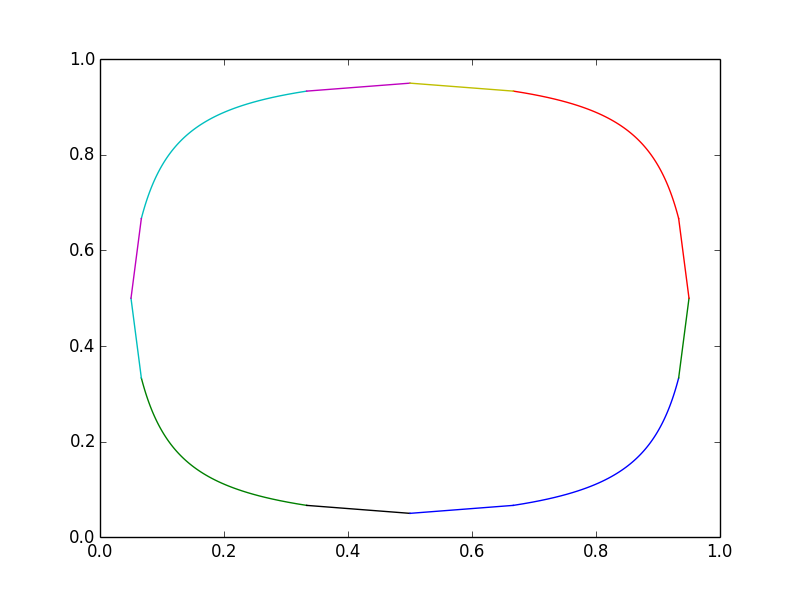}
\caption{$\Delta(\frac{1}{10})$ for the unit square.}
\label{dense weights}
\end{figure}

\newpage

In order to orient the reader, for the rest of the introduction we discuss at some length, the basic strategy of the proof. In the body of the paper, which gives full details, we will refer back to this preview as a roadmap. 

Let $L$ be a very ample toric line bundle on a smooth projective toric variety $X$. As in \cite{GL}, \cite{L89} and \cite{EL}, for $L$ in the evaluation map:
$$\nu_L: H^0(X, L) \otimes_\C \O_X \ra \O_X(L)$$
we put $M_L = \mathrm{ker} \ \nu_L$. 
Thus $M_L$ is a vector bundle sitting in the basic exact sequence:
\begin{equation}\label{ML}
0 \ra M_L \ra H^0(X, L) \otimes_\C \O_X \ra \O_X(L) \ra 0
\end{equation}
Thanks to Demazure vanishing, we have Prop. \ref{cohomology}:
\begin{equation}\label{idea of proof cohomology}
K_{p,q}(\pp;L) = H^q(X, \wedge^{p+q}M_{L}).
\end{equation}
So the issue is to identify the torus weights appearing in the right hand side of the equality.

To a first approximation, the idea is to find torus equivariant spaces 
$$U, \quad W_1, \quad \mathrm{with} \ \dim U = 1, \quad \dim W_1 \gg 0$$
together with a torus equivariant map:
\begin{equation}\label{idea of proof surjective map}
H^q(X, \wedge^{p+q}M_{L}) \longrightarrow U \otimes \wedge^{p+q} W_1.
\end{equation}
Suppose one knew that (\ref{idea of proof surjective map}) is surjective. Then we can conclude that every weight appearing in $U \otimes \wedge^{p+q}W_1$ appears in:
$$K_{p,q}(\pp;L) = H^q(X, \wedge^{p+q}M_{L}).$$
On the other hand, one can compute combinatorially the weights of $U \otimes \wedge^{p+q}W_1$ from the weights of $U$ and $W_1$, and the results stated in the previous section would follow. 

Strictly speaking, we do not achieve proving surjectivity of (\ref{idea of proof surjective map}). What we show is that we can find torus stable vector spaces $W_0$ of small dimension and $W_1$ of large dimension with $W_0$ a quotient of $W_1$ with the following property. Let $W$ be any quotient of $W_1$ that factors the map to $W_0$. 
$$W_1 \xrightarrow\!\!\!\!\to W \xrightarrow\!\!\!\!\to W_0.$$
Then there is a surjective mapping:
$$H^q(X, \wedge^{p+q}M_{L}) \longrightarrow U \otimes \wedge^{p+q} W$$
with $\dim W = p+q$. As before, this allows us to produce many weights appearing in $K_{p,q}$ and the stated theorem follows. 

The next point is to understand how to construct $U$, $W_0$ and $W_1$. Take a $w$-dimensional torus stable quotient of $H^0(X,L)$ and denote it by $W$. $W$ defines a toric stable linear subspace:
$$\P(W) \subset \P(H^0(L)).$$
Let $Z \subset X$ be the scheme theoretic intersection:
\begin{equation}\label{idea of proof W intersect X}
Z = \P(W) \cap X.
\end{equation}
Then there is a natural map:
$$W \otimes_\C \O_X \longrightarrow L \otimes \O_Z.$$
Taking wedge powers, a local analysis (cf. (\ref{sigma pi diagram})) shows that there is a surjective homomorphism:
\begin{equation*}
\wedge^w M_L \longrightarrow \I_{Z/X} \otimes \wedge^wW.
\end{equation*}
Hence, we have a map:
\begin{equation}\label{idea of proof cohomology of ideal sheaf of Z}
H^q(X,\wedge^w M_L) \longrightarrow H^q(X,\I_{Z/X}) \otimes \wedge^w W
\end{equation}
which is also toric equivariant. The goal is to choose $W$ such that:
\begin{equation}\label{U not equal to 0}
U = H^q(X,\I_{Z/X}) = \C \neq 0.
\end{equation}

In practice, (\ref{idea of proof cohomology of ideal sheaf of Z}) is achieved by first choosing a toric stable subspace $Z \subset X$ such that (\ref{U not equal to 0}) holds, and then choosing $W$ to satisfy (\ref{idea of proof W intersect X}). This is carried out in Section 3. Furthermore, we will see in Section 4 that we can take as $W$ quotients of a fixed very large $W_1$ (Prop. \ref{weights}).

The main technical result of Section 3 is that when we follow this outline, the resulting map (\ref{idea of proof cohomology of ideal sheaf of Z})
is surjective (Proposition \ref{carry syzygies}). One key point here is that although the map (\ref{idea of proof cohomology of ideal sheaf of Z}) is toric, to prove that it is surjective, we do not need to stay in the toric world. Hence, we can follow the inductive arguments in \cite{EL} and \cite{Z} with essentially no modification. 

There is one further asymptotic ingredient. Namely, we are interested in the asymptotics of $K_{p,q}(X;L_d)$ where $L_d = A^{\otimes d}$ and $A$ is very ample. When we go through the constructions just outlined for $L_d$, we arrive at the following situation. 

We have torus stable subspaces $W_{0,d}, W_{1,d}$, $W_d$:
$$\mathrm{weights}(\wedge^{p+q}W_d) + (\mathrm{some \ fixed \ weight}) \subset \mathrm{weights}(K_{p,q}(X;L_d)).$$ 
Moreover, 
$$\dim W_d = p+q, \ 
\mathrm{and} \ W_{1,d} \xrightarrow\!\!\!\!\to W_d \xrightarrow\!\!\!\!\to W_{0,d}.$$
Furthermore, 
$$\dim (W_{0,d}) \in o(d^n), \ \mathrm{and} \ h^0(X,L_d) - W_{1,d} \in o(d^n).$$
Thus, up to asymptotically insignificant contributions, all the weights of $\wedge^{p+q}W_{1,d}$ appear in $K_{p,q}$. It remains to prove a lemma on asymptotics of normalized weights for wedge powers. This is the content of Section 5. The asymptotic behavior we deduce applies to any sequence of toric quotient spaces $W_d$ asymptotically equal to $H^0(X,L_d)$ in dimension. Then specializing to $W_{1,d}$ gives us a lower bound on the weights that appear by the above discussion. Applying the result to $H^0(X,L_d)$ gives us an upper bound by the definition of Koszul cohomology. Hence, we get our sharp asymptotic description. 

I would like to thank my advisor, Rob Lazarsfeld, for his continued suggestions and support. I would also like to thank Daniel Erman, David Speyer and Linquan Ma for helpful discussions and fruitful conversations.

\section{Surjectivity of map induced by a secant space}

In this section, we adapt the computations in \cite{EL} and \cite{Z} to the toric case. The reader who is not familiar with the argument in these papers might find it helpful to read the outline of the proof appearing in the end of the previous section.

\subsection{Key lemma} We first recall the key vector bundle used to compute syzygies. Let $X$ be a smooth projective toric variety over $\C$. Let $A$ be a fixed toric very ample line bundle on $X$. We use $L$ to denote any toric very ample line bundle on $X$ (we will later replace $L$ with $L_d = A^{\otimes d}$). 

As in \cite{GL}, \cite{L89} and \cite{EL}, in the evaluation map:
$$\nu_L: H^0(X, L) \otimes_\C \O_X \ra L$$
we put $M_L = \mathrm{ker} \ \nu_L$. Thus $M_L$ is a vector bundle sitting in the basic exact sequence:
$$ 0 \ra M_L \ra H^0(X, L) \otimes_\C \O_X \ra L \ra 0.$$
We will need the following fact in this chapter:
\begin{prop}\label{Demazure}(Demazure)
For any projective toric variety $X$, and a very ample divisor $A$, one has: 
$$H^m(X,\O_X(jA)) = 0 \  \mathrm{for} \ m \geq 1, j \geq 0.$$
\end{prop}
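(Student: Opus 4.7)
The plan is to prove this via the weight-space decomposition of torus-equivariant cohomology on toric varieties, due to Danilov and Demazure. Since $A$ is torus-invariant and very ample, we can write $A = \sum_\rho a_\rho D_\rho$ as a sum over the rays $\rho$ of the fan $\Sigma$ defining $X$, and the associated support function $\varphi_A : |\Sigma| \to \R$ is a strictly convex piecewise-linear function. Because $\O_X(jA)$ inherits a canonical $T$-linearization, its cohomology decomposes into weight spaces
\[
H^m(X, \O_X(jA)) = \bigoplus_{u \in M} H^m(X, \O_X(jA))_u
\]
indexed by the character lattice $M$ of the torus $T$.

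The key step is to identify each weight space combinatorially. Running the standard Cech calculation on the $T$-invariant affine open cover $\{U_\sigma\}_{\sigma \in \Sigma}$ of $X$ (as carried out in Fulton, Section 3.5, or Cox--Little--Schenck, Chapter 9) yields an isomorphism
\[
H^m(X, \O_X(jA))_u \;\cong\; \widetilde{H}^{m-1}(V_{u,j}; \C),
\]
where $V_{u,j} \subset |\Sigma|$ is a certain subset of the support of the fan determined by the position of the linear functional $u$ relative to $j\varphi_A$. This topological formula is the main technical ingredient; the vanishing then reduces to a short convexity argument.

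For $j \geq 1$, strict convexity of $j\varphi_A$ forces $V_{u,j}$ to be either empty or contractible (its complement in $\R^n$ is convex), so $\widetilde{H}^{m-1}(V_{u,j}; \C) = 0$ for all $m \geq 1$. For $j = 0$, the same formula gives $V_{u,0} = |\Sigma| = \R^n$ when $u = 0$ and $V_{u,0} = \emptyset$ otherwise, and again the reduced cohomology vanishes in positive degrees. I expect the main obstacle to be pinning down the weight-space formula together with its sign conventions; once that is in place the vanishing is essentially the observation that sublevel sets of a convex function are convex. Since both the formula and its consequence are entirely standard, in practice one would simply cite the result from the toric-geometry literature rather than reprove it here.
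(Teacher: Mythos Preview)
Your proposal is correct and, in its final sentence, anticipates exactly what the paper does: the paper's proof consists solely of a citation to Demazure vanishing as stated in Cox--Little--Schenck, Theorem 9.2.3. Your sketch of the underlying weight-space / local-cohomology argument is accurate and strictly more detailed than what the paper provides, but the approach is the same.
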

\begin{proof}
This follows from Demazure vanishing (cf. Thm 9.2.3 \cite{CLS}).
\end{proof}
In our setting $L_d = A^{\otimes d}$, we have:
\begin{prop}\label{cohomology}
For $1 \leq q \leq n$, $K_{p,q}(\pp;L_d) = H^q(X, \wedge^{p+q}M_{L_d})$.
\end{prop}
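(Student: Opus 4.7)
The plan is to express $K_{p,q}(X; L_d)$ as an iterated higher cohomology of exterior powers of $M_{L_d}$, with Demazure vanishing (Prop.~\ref{Demazure}) as the sole engine.

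Abbreviate $V = H^0(X, L_d)$ and $M = M_{L_d}$. The workhorse will be the family of short exact sequences obtained by taking $(k+1)$-st exterior powers of the basic sequence $0 \to M \to V \otimes \O_X \to L_d \to 0$ and twisting by $jL_d$ for $j \geq 0$:
$$(*)_{k,j}: \quad 0 \to \wedge^{k+1}M \otimes jL_d \to \wedge^{k+1}V \otimes \O_X(jL_d) \to \wedge^k M \otimes (j+1)L_d \to 0.$$
Proposition~\ref{Demazure} gives $H^i(X, jL_d) = 0$ for $i \geq 1$ and $j \geq 0$, so the middle term of $(*)_{k,j}$ is cohomologically trivial in positive degrees. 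The long exact sequence of $(*)_{k,j}$ therefore yields isomorphisms
$$H^i(\wedge^k M \otimes (j+1)L_d) \xrightarrow{\sim} H^{i+1}(\wedge^{k+1}M \otimes jL_d) \quad \text{for all } i \geq 1.$$
Telescoping these for $(k,j) = (p+1, q-2), (p+2, q-3), \ldots, (p+q-1, 0)$ will collapse $H^1(\wedge^{p+1}M \otimes (q-1)L_d)$ into $H^q(\wedge^{p+q}M)$.

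It then remains to identify $K_{p,q}(X;L_d)$ with $H^1(\wedge^{p+1}M \otimes (q-1)L_d)$. Taking $H^0$ of $(*)_{p, q-1}$ and invoking Demazure on the locally free middle term produces a four-term exact sequence exhibiting $H^1(\wedge^{p+1}M \otimes (q-1)L_d)$ as the cokernel of the multiplication map
$$\wedge^{p+1}V \otimes H^0((q-1)L_d) \longrightarrow H^0(\wedge^p M \otimes qL_d).$$
Dually, taking $H^0$ of $(*)_{p-1, q}$ and $(*)_{p-2, q+1}$ realizes $H^0(\wedge^p M \otimes qL_d)$ as the kernel of the Koszul differential $\wedge^p V \otimes H^0(qL_d) \to \wedge^{p-1}V \otimes H^0((q+1)L_d)$. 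A short diagram chase will match the cokernel above with the middle cohomology of the Koszul complex, i.e. with $K_{p,q}(X;L_d)$.

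This is standard Green-style bookkeeping, so there is no substantive obstacle; the one point that needs attention is that Demazure vanishing is applied only to nonnegative multiples of $L_d$, which is automatic in the ranges of indices appearing above. The constraint $q \leq n$ plays no role in the computation itself; it just ensures the right-hand side $H^q(\wedge^{p+q}M_{L_d})$ is not forced to vanish for purely dimensional reasons.
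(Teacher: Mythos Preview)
Your argument is correct and is precisely the standard computation that the paper defers to \cite[Prop.~3.2, 3.3]{EL} and \cite[Prop.~1.1]{Z}: the paper's own proof simply notes that Demazure vanishing (Prop.~\ref{Demazure}) supplies the required input $H^i(X, mL_d) = 0$ for $i > 0$, $m \geq 0$, and then cites those references. You have faithfully unpacked that cited argument.
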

\begin{proof}
The conclusion follows as in \cite[Prop. 1.1]{Z}  and \cite[Prop. 3.2, Prop. 3.3]{EL} if we know: 
$$H^i(\pp,\O_\pp(mL_d)) = 0 \ \mathrm{for} \  i> 0, m \geq 0.$$ 
This follows from the Proposition above.
\end{proof}

Let $W$ be a quotient of $H^0(X,L)$ of dimension $w$. Then we have 
$$\P(W) \subset \P(H^0(X,L)).$$
Let
$$Z = \P(W) \cap X$$ 
the scheme-theoretic intersection of $\P(W)$ with $X$. This gives rise to a surjective map of sheaves:
$$ W_X = W \otimes \O_X \longrightarrow L \otimes \O_Z,$$
and we denote its kernel by $\Sigma_W$. So we get an exact diagram of sheaves:
\begin{equation}\label{first diagram}
\begin{gathered}
\xymatrix{
0 \ar[r] & M_L \ar[r] \ar[d] & V \otimes \O_X \ar[r] \ar[d] & L \ar[r] \ar[d] & 0 \\
0 \ar[r] & \Sigma_W \ar[r] & W \otimes \O_X \ar[r] & L \otimes \O_Z \ar[r] & 0 \\
}
\end{gathered}
\end{equation}

Through the local analysis of \cite{EL} (3.10), we get a diagram :
\begin{equation}\label{sigma pi diagram}
\begin{gathered}
\xymatrix{
\bigwedge^w \Sigma_W \ar[r] \ar[d] & \bigwedge^w W_X \ar[d] \\
\I_{Z/X} \otimes \bigwedge^w W_X \ar[r] &  \O_X \otimes \bigwedge^w W_X\\
}
\end{gathered}
\end{equation}
and this induces a surjective map (cf. the map above \cite{EL} Def. 3.8): 
\begin{equation}\label{sigma}
\sigma_\pi: \wedge^w M_L \rightarrow \I_{Z/X} \otimes \wedge^wW_X
\end{equation}
Then $\sigma_\pi$ induces a map:
\begin{equation}\label{sigma induced}
H^q(X,\wedge^w M_L) \rightarrow H^q(X,\I_{Z/X}) \otimes \wedge^w W
\end{equation}

The above works in general without any toric hypothesis. In our setting, when $X$, $L$, $W$ are toric, all the above maps are toric equivariant. Following the notations above, the key conclusion of this section is the following lemma:
\begin{lem}\label{weight inclusion}
For $L = L_d$ with $d \gg 0$ and $1 \leq q \leq n$, there exists a torus stable quotient $W$ with $Z = \P(W) \cap X$ and 
$$H^q(X,I_{Z/X}) \neq 0,$$ such that the induced torus equivariant map:
$$H^q(X,\wedge^w M_L) \rightarrow H^q(X,\I_{Z/X}) \otimes \wedge^w W$$
where $w = \dim W$, is surjective.  Therefore,  any torus weight in $H^q(X,\I_{Z/X}) \otimes \wedge^w W$ also appears in $K_{w-q,q}(X;L)$.
\end{lem}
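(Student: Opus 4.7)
The plan is to follow the three-step outline from Section 0, adapting the arguments of \cite{EL} and \cite{Z} to the torus-equivariant setting.

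First I would construct a torus-invariant subscheme $Z \subset X$ satisfying $H^q(X, \I_{Z/X}) = \C$ (in particular nonzero). Following Ein--Lazarsfeld, one takes $Z$ to be a carefully chosen torus-invariant configuration of lower-dimensional subvarieties whose failure of projective normality contributes exactly one class to $H^q$; since $L_d$ is very ample and very positive for $d \gg 0$, enough torus-invariant sections of $L_d$ exist to realize such a configuration equivariantly. Next, I would produce a torus-stable quotient $W$ of $H^0(X, L_d)$ with $\P(W) \cap X = Z$ scheme-theoretically. Because $H^0(X, L_d)$ decomposes as a direct sum of one-dimensional weight spaces indexed by lattice points of $d \cdot \Delta$, any torus-stable subspace is prescribed by a choice of weights; setting $V$ to be the torus-stable span of the monomial sections vanishing on $Z$ and defining $W = H^0(X, L_d)/V$ yields the required equivariant quotient, provided $V$ is large enough to cut out $Z$ (which again uses $d \gg 0$).

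The main step, and the expected principal obstacle, is proving surjectivity of the induced map
\begin{equation*}
H^q(X, \wedge^w M_L) \longrightarrow H^q(X, \I_{Z/X}) \otimes \wedge^w W.
\end{equation*}
Letting $K$ denote the kernel of the sheaf-level surjection $\sigma_\pi$ from (\ref{sigma}), the long exact cohomology sequence reduces the problem to showing $H^{q+1}(X, K) = 0$. The sheaf $K$ admits a filtration built from $\Sigma_W$, $M_L$ and their exterior powers via the diagram (\ref{first diagram}); an inductive argument reduces the required vanishing to cohomology of positive twists of $\wedge^j M_L$ and $\wedge^j \Sigma_W$ by powers of $A$. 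For $d \gg 0$, Proposition \ref{Demazure} plays the role that Kodaira vanishing plays in \cite{EL}, so the inductive scheme of that paper and of \cite{Z} transfers essentially verbatim. Crucially, as noted in the introduction, although the desired surjection is torus equivariant, proving surjectivity itself does not require the equivariant structure, and all bookkeeping can be done at the level of ordinary sheaf cohomology. The main difficulty I anticipate is verifying that every intermediate cohomology vanishing in the EL/Z induction remains valid with Demazure substituted for Kodaira, uniformly in $d$.

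Granting surjectivity, the weight statement is immediate. By Proposition \ref{cohomology}, the source is $K_{w-q,q}(X;L)$, and the surjection is torus equivariant because every map in (\ref{first diagram}) and (\ref{sigma pi diagram}) is; hence every torus weight appearing in $H^q(X, \I_{Z/X}) \otimes \wedge^w W$ also appears in $K_{w-q, q}(X;L)$.
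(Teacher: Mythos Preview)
Your outline has the right three-part shape (build $Z$, build $W$, prove surjectivity), and the final paragraph is fine. But the two substantive steps are handled differently in the paper, and in one case your proposed mechanism would not go through.

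\textbf{Construction of $Z$.} The paper does not leave this vague: $Z$ is built explicitly as a toric complete intersection $Z = e_1 \cap \cdots \cap e_{c-1} \cap (e_c + \cdots + e_m)$, where the $e_i$ are the torus-invariant prime divisors and $c = n+1-q$ (Lemma~\ref{complete intersection}). The nonvanishing $H^q(X,\I_{Z/X}) = \C$ then comes from Serre duality on the $(q)$-dimensional toric face $F = e_1 \cap \cdots \cap e_{c-1}$, using $\I_{Z/F} = \O_F(K_F)$ (Proposition~\ref{not zero cohomology}); it has nothing to do with ``failure of projective normality.'' This explicit choice is what drives the later vanishings (Propositions~\ref{x vanishing}, \ref{zero cohomology}) needed in the induction, so the vagueness here is a genuine gap, not just missing detail.

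\textbf{Surjectivity.} Your reduction to $H^{q+1}(X,K)=0$ with $K = \ker\sigma_\pi$ is logically sufficient, but this is \emph{not} what the paper (or \cite{EL}, \cite{Z}) proves, and your claimed filtration of $K$ by wedge powers of $M_L$ and $\Sigma_W$ twisted by powers of $A$ does not exist as stated: there is nothing in diagram~(\ref{first diagram}) that introduces twists by $A$ into $K$. The twists by $A$ in the actual argument arise from an entirely different mechanism. One chooses a general (non-toric) divisor $\ol X \in |A|$, restricts the whole picture to $\ol X$, and shows (Lemma~\ref{inductive lemma}) that if $\ol W$ carries weight $(q-1)$ syzygies for $\ol B + \ol A$ on $\ol X$ and $H^q(X,\I_{Z/X}(B+A))=0$, then $W$ carries weight $q$ syzygies for $B$ on $X$. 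Iterating this hyperplane cut $q$ times is what produces the successive twists by $A$ and reduces to the trivial $q=0$ case (Proposition~\ref{carry syzygies}). Demazure vanishing enters only to verify the auxiliary hypotheses $H^{q-i}(X_i,\I_{Z_i/X_i}((i+1)A))=0$ along the way, not as a substitute for Kodaira inside a filtration spectral sequence. So your sentence ``the inductive scheme of that paper transfers essentially verbatim'' is correct, but your description of what that scheme is, is not.
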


In Section 2, we will show that there are many choices of $W$, giving many toric weights in $K_{w-q,q}(X;L_d)$.

\subsection{Proof of Lemma \ref{weight inclusion}}

Once torus equivariance has been stablished as in (\ref{sigma induced}), surjectivity has nothing to do with the torus action. So we will be able to prove the surjectivity of (\ref{sigma induced}) by proving the surjectivity of:
\begin{equation}\label{sigma pi}
H^q(X,\wedge^w M_L) \rightarrow H^q(X,\I_{Z/X}).
\end{equation}
The rest of the proof is technical and follows the same lines of attack as in \cite{Z} and \cite{EL}. We will give the choice of $Z$ and $W$ later (Lemma \ref{complete intersection}, Prop. \ref{carry syzygies}). For now, we introduce some terminology that will help in the induction. 

For induction in the proof, we have to add in a twist of the map in \ref{sigma pi}. Let $B$ be a line bundle and consider:
\begin{equation}\label{sigma pi with B}
H^q(X,\wedge^w M_L(B)) \rightarrow H^q(X,\I_{Z/X}(B))
\end{equation}
\begin{defn}\label{definition of carrying syzygies}
\textnormal{
Let $W$ be a quotient of $H^0(X,L)$ as above. We say that $W$ \textit{carries weight $q$ syzygies} for $B$ if the map induced by $\sigma_\pi$ in equation (\ref{sigma pi with B}) is surjective. (We also say the same for $q = 0$ for notational convenience even though it isn't necessarily directly related to syzygies.)
}
\end{defn}

Let us set up some inductive notation. Take a general divisor $\ol{X} \in |A|$ so that $\ol{X}$ is irreducible and diagram (\ref{first diagram}) remains exact after tensoring with $\O_{\ol{X}}$. For $0 \leq i \leq q-1$, let 
$$X_0 = X, \quad Z_0 = Z, \quad A_0 = A.$$ 
Having made the definitions for $i-1$, choose a general $X_i \in |A_{i-1}|$ so that $X_i$ is irreducible and the corresponding diagram (\ref{first diagram}) for $X_{i-1}$ remains exact after tensoring with $\O_{X_i}$ (and as previously defined, $\ol{X} = X_1$). Let 
$$Z_i = Z_{i-1} \cap X_i, \quad A_i = A_{i-1}|_{X_{i-1}}.$$

Now we construct a toric $Z$ in our smooth projective toric variety $X$ that satisfy the good properties in Definition \ref{adapted}. Let $-K_X = e_1+ ... + e_m$ where $\{e_i\}$ are the prime toric invariant divisors. Let $c = n+1 - q$ with $ 1 \leq q \leq n$.

\begin{lem}\label{complete intersection}
We can order the $e_i$ such that $Z = e_1 \cap ... \cap e_{c-1} \cap (e_c + ... + e_m)$ is a complete intersection. 
\end{lem}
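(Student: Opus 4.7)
The plan is to use the orbit--cone correspondence for the fan of $X$ together with the fact that, because $X$ is smooth (hence Cohen--Macaulay), the divisors $e_1,\ldots,e_{c-1},\,e_c+\cdots+e_m$ form a regular sequence as soon as each successive intersection drops dimension by exactly one. So the task splits into (i) arranging the first $c-1$ divisors to cut out a smooth irreducible subvariety of codimension $c-1$, and (ii) checking that the divisor $e_c+\cdots+e_m$ meets this subvariety properly.

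For (i), I would order the primitive ray generators $v_1,\ldots,v_m$ of the fan so that $v_1,\ldots,v_{c-1}$ are the rays of a single $(c-1)$-dimensional cone $\sigma_0$. Such a cone exists because $c-1 = n-q \leq n-1$ and every maximal ($n$-dimensional) cone has a face of each smaller dimension. Then $V(\sigma_0) = e_1 \cap \cdots \cap e_{c-1}$ is the orbit closure attached to $\sigma_0$, which, by smoothness of $X$, is a smooth irreducible projective toric variety of dimension $q\geq 1$; each partial intersection $e_1\cap\cdots\cap e_k$ for $k\leq c-1$ is similarly the orbit closure of a face of $\sigma_0$ and has codimension $k$, so $e_1,\ldots,e_{c-1}$ is already a regular sequence.

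For (ii), the orbit--cone correspondence says that for $i\geq c$, either $e_i|_{V(\sigma_0)}=0$ (when $\sigma_0$ and $v_i$ do not together span a cone of the fan) or $e_i|_{V(\sigma_0)}$ is a prime torus-invariant divisor of $V(\sigma_0)$, and every prime toric divisor of $V(\sigma_0)$ arises this way. Consequently the restriction $D := (e_c+\cdots+e_m)|_{V(\sigma_0)}$ equals the sum of all prime torus-invariant divisors of $V(\sigma_0)$, i.e.\ the anticanonical divisor $-K_{V(\sigma_0)}$. Since $V(\sigma_0)$ is a positive-dimensional projective toric variety it carries at least one such prime divisor, so $D$ is a nonzero effective Cartier divisor on $V(\sigma_0)$, and by irreducibility of $V(\sigma_0)$ it defines a proper closed subscheme of codimension one. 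Any ordering of $v_c,\ldots,v_m$ then works, and the boundary case $c=1$ is handled by the same argument with $V(\sigma_0)=X$ and $Z=-K_X$.

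I do not anticipate a serious technical obstacle: the argument is essentially bookkeeping with the orbit--cone dictionary. The one point to be careful about is the identification of the prime toric divisors of $V(\sigma_0)$ with a subset of the restrictions $e_i|_{V(\sigma_0)}$ via the star-fan description, since that is what guarantees $D\neq 0$ and yields the complete intersection property.
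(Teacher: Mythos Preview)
Your proposal is correct and follows essentially the same route as the paper: both pick the first $c-1$ divisors from the rays of a single cone so that their intersection is the orbit closure $F=V(\sigma_0)$, and then check that $e_c+\cdots+e_m$ cuts $F$ properly. The only cosmetic difference is in this last step---the paper takes $e_1,\ldots,e_n$ all from one maximal cone so that $e_c$ in particular already meets $F$, whereas you identify $(e_c+\cdots+e_m)|_F$ with $-K_F$, a fact the paper uses anyway in the subsequent computation of $H^q(\I_{Z/X})$.
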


\begin{proof}
Choose $e_1,...,e_n$ such that they generate an n-dimensional cone. Then $F = e_1 \cap ... \cap e_{c-1}$ is a complete intersection. For any $i>c-1$, $F$ either does not meet $e_i$, or it does so transversely since adding a ray to a cone increase its dimension by at most 1. It meets at least one of them, $e_c$, since $c \leq n$.
\end{proof}

Next, we establish a number of properties of $Z$. 
\begin{prop}\label{not zero cohomology}
With the above choice of $Z$:
$$H^q(X,I_{Z/X}) = \mathbb{C} \neq 0$$
\end{prop}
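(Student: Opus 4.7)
The plan is to restrict attention to $F = e_1 \cap \dots \cap e_{c-1}$, which by Lemma \ref{complete intersection} is a smooth complete intersection. Since $X$ is smooth and the $e_i$ meet transversely, $F$ is a smooth projective toric variety of dimension $n-(c-1)=q$, and $Z$ is the Cartier divisor on $F$ cut out by $(e_c+\dots+e_m)|_F$. The crucial identification is adjunction: applied to the transverse toric complete intersection $F$, we get
$$K_F = (K_X + e_1 + \dots + e_{c-1})|_F,$$
and since $-K_X = e_1+\dots+e_m$, this yields $-K_F = (e_c+\dots+e_m)|_F$. So $\mathcal{O}_F(Z)\cong \mathcal{O}_F(-K_F)$; equivalently $\mathcal{I}_{Z/F}\cong \mathcal{O}_F(K_F)$. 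The philosophy is that $Z$ is an anticanonical section of $F$, and this will make its cohomology computable by Serre duality together with Demazure vanishing.

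Next I would compute $H^j(Z,\mathcal{O}_Z)$ by running the long exact sequence associated to
$$0 \to \mathcal{O}_F(K_F) \to \mathcal{O}_F \to \mathcal{O}_Z \to 0.$$
On the smooth complete toric variety $F$, Proposition \ref{Demazure} (applied to the trivial divisor, or Demazure vanishing) gives $H^i(F,\mathcal{O}_F)=0$ for $i\geq 1$, and by Serre duality $H^j(F,\mathcal{O}_F(K_F))\cong H^{q-j}(F,\mathcal{O}_F)^{\vee}$, which is $\mathbb{C}$ for $j=q$ and $0$ for $j<q$. A routine chase then produces
$$H^0(Z,\mathcal{O}_Z)=\mathbb{C},\ H^{j}(Z,\mathcal{O}_Z)=0 \text{ for } 1\leq j\leq q-2,\ H^{q-1}(Z,\mathcal{O}_Z)=\mathbb{C},$$
valid whenever $q\geq 2$. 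The case $q=1$ is slightly degenerate: then $F\cong\mathbb{P}^1$ and $Z$ is a degree-$2$ (anticanonical) zero-dimensional subscheme, so $H^0(Z,\mathcal{O}_Z)=\mathbb{C}^2$.

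Finally, I would conclude by running the long exact sequence of
$$0 \to \mathcal{I}_{Z/X} \to \mathcal{O}_X \to \mathcal{O}_Z \to 0$$
on $X$, where Proposition \ref{Demazure} again gives $H^i(X,\mathcal{O}_X)=0$ for $i\geq 1$. For $q\geq 2$ this immediately yields $H^q(X,\mathcal{I}_{Z/X})\cong H^{q-1}(Z,\mathcal{O}_Z)=\mathbb{C}$. For $q=1$, the relevant piece is
$$0 \to H^0(X,\mathcal{I}_{Z/X}) \to \mathbb{C} \to H^0(Z,\mathcal{O}_Z)=\mathbb{C}^2 \to H^1(X,\mathcal{I}_{Z/X}) \to 0,$$
and since the restriction map $\mathbb{C}\to\mathbb{C}^2$ sends $1\mapsto(1,1)$ (injective), the cokernel is one-dimensional, giving $H^1(X,\mathcal{I}_{Z/X})=\mathbb{C}$.

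The whole argument is essentially two short-exact-sequence chases, so the only real obstacle is the initial identification $\mathcal{O}_F(Z)\cong -K_F$ via adjunction plus Demazure vanishing; once that is in hand the cohomology cascades. The unified case $q\geq 2$ and the low-dimensional case $q=1$ need to be handled separately only because $H^0(\mathcal{O}_Z)$ jumps, but both give the same answer $H^q(X,\mathcal{I}_{Z/X})=\mathbb{C}$.
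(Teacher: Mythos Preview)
Your proof is correct and follows essentially the same approach as the paper: both arguments identify $\mathcal{I}_{Z/F}\cong\mathcal{O}_F(K_F)$ via adjunction on the toric complete intersection $F=e_1\cap\dots\cap e_{c-1}$, then combine Serre duality on $F$ with Demazure vanishing and the two ideal-sheaf short exact sequences (on $F$ and on $X$) to obtain $H^q(X,\mathcal{I}_{Z/X})\cong\mathbb{C}$, treating $q=1$ separately. The only cosmetic difference is the order of operations: you compute $H^{q-1}(\mathcal{O}_Z)$ directly from $0\to\mathcal{O}_F(K_F)\to\mathcal{O}_F\to\mathcal{O}_Z\to 0$, while the paper passes through $H^q(\mathcal{I}_{Z/F})$ first and then applies Serre duality.
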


\begin{proof}
If $q = 1$, then $c = n$, and in this case $Z$ consists of two points. The short exact sequence 
$$0 \ra \I_{Z/X} \ra \O_X \ra \O_Z \ra 0$$
induces:
$$ H^0(\O_X) \ra H^0(\O_Z) \ra H^1(\I_{Z/X}) \ra H^1(\O_X)$$
where $h^0(\O_X) = 1$, $h^0(\O_Z) = 2$ and $h^1(\O_X) = 0$ (since structure sheaves of toric varieties do not have higher cohomology). Hence, $H^1(\I_{Z/X}) = \C$. 

Assume $q \geq 2$. From
$$ 0 \ra \I_{Z/X} \ra \O_X \ra \O_Z \ra 0$$
we get:
$$0 = H^{q-1}(\O_X) \ra H^{q-1}(\O_Z) \ra H^q(\I_{Z/X}) \ra H^q(\O_X) = 0.$$
Then $H^{q-1}(\O_Z) = H^q(\I_{Z/X})$. 

Let $F = e_1 \cap e_2... \cap e_{c-1}$. From
$$ 0 \ra \I_{Z/F} \ra \O_F \ra O_Z \ra 0.$$
we get:
$$0 = H^{q-1}(\O_F) \ra H^{q-1}(\O_Z) \ra H^q(\I_{Z/F}) \ra H^q(\O_F) = 0.$$
we need to compute $H^q(\I_{Z/F})$. Now $Z = F \cap (e_{c} + ...+ e_m)$ and $e_{c} + ... + e_m = -K_F$. Since
$$\I_{Z/F} = \O_F(K_F), \quad \dim F = n - {c-1} = n - (n - q) = q,$$
and then Serre duality applies and we have:
$$H^q(\I_{Z/F}) = H^0(\O_F) = \C.$$
\end{proof}

\begin{prop}
For all $m \geq 1, j \geq i \geq 0 $:
\begin{enumerate}\label{x vanishing}
\item $H^m(X_i, \O_{X_i}(jA)) = 0$.
\item $H^m(Z_i, \O_{Z_i}(jA)) = 0$.
\end{enumerate}
\end{prop}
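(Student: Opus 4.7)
The plan is an induction on $i$ using standard divisor-restriction sequences, with the genuine content concentrated in the base case $i = 0$. Part (1) at $i = 0$ is exactly Proposition~\ref{Demazure}. For part (2) at $i = 0$, I would use the explicit description of $Z$ furnished by Lemma~\ref{complete intersection}: set $F = e_1 \cap \cdots \cap e_{c-1}$, which is a projective toric subvariety of dimension $q$ on which $A|_F$ remains very ample, and recall from the proof of Proposition~\ref{not zero cohomology} that $Z$ is cut out of $F$ by $(e_c + \cdots + e_m)|_F = -K_F$. Twisting the sequence
\[ 0 \to \O_F(K_F) \to \O_F \to \O_Z \to 0 \]
by $\O_F(jA)$ and passing to cohomology, I need $H^m(F, \O_F(jA)) = 0$, which is immediate from Demazure on $F$, and $H^{m+1}(F, \O_F(K_F + jA)) = 0$; the latter is Serre-dual to $H^{q-m-1}(F, \O_F(-jA))^{\ast}$, which vanishes for $j \geq 1$ by toric Kodaira-type vanishing on $F$.

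For the inductive step, by construction $X_i$ is an effective divisor on $X_{i-1}$ of class $A|_{X_{i-1}} = A_{i-1}$, giving the restriction sequence
\[ 0 \to \O_{X_{i-1}}((j-1)A) \to \O_{X_{i-1}}(jA) \to \O_{X_i}(jA) \to 0. \]
The general choice of $X_i$ in the setup (encoded in the requirement that diagram~(\ref{first diagram}) on $X_{i-1}$ remain exact after $\otimes \O_{X_i}$) forces $X_i$ to meet $Z_{i-1}$ properly, so $Z_i$ is a Cartier divisor on $Z_{i-1}$ of class $A|_{Z_{i-1}}$, and the analogous sequence
\[ 0 \to \O_{Z_{i-1}}((j-1)A) \to \O_{Z_{i-1}}(jA) \to \O_{Z_i}(jA) \to 0 \]
is exact. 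Taking long exact sequences, the vanishing of $H^m$ at level $j$ on the right-hand subscheme follows from the vanishing of $H^m$ at level $j$ and of $H^{m+1}$ at level $j-1$ on the ambient scheme; both are furnished by the inductive hypothesis for $j \geq i$ and $m \geq 1$, completing the induction simultaneously for (1) and (2).

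The whole argument thus reduces to the base case of (2). I expect this to be the main obstacle, since the mildly delicate point is that the toric orbit closure $F$ need not be smooth, so I cannot invoke classical Kodaira vanishing and must cite the toric version for ample Cartier divisors on projective toric varieties. Once that vanishing is in hand, everything else is routine propagation through long exact sequences.
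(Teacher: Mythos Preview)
Your inductive scheme via restriction sequences is exactly what the paper does; for part (2) the paper only says ``the second assertion is analogous,'' so you have actually written out more than the original.

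You have, however, half-noticed a real issue without flagging it. Your base case for (2) establishes vanishing only for $j \geq 1$ --- and this is not a gap in your argument but in the \emph{statement}: for $q \geq 2$, taking $i = j = 0$ and $m = q-1$, Proposition~\ref{not zero cohomology} itself gives $H^{q-1}(Z,\O_Z) \cong H^q(X,\I_{Z/X}) = \C \neq 0$, so (2) is false as written. Your inductive step then silently assumes the base case at $j-1 \geq i-1 \geq 0$, which you do not have. The clean fix is to assert (2) only for $j \geq i+1$: your base case $j \geq 1$ then initializes the induction, the step at level $i$ needs only levels $j$ and $j-1$ on $Z_{i-1}$ with $j-1 \geq i = (i-1)+1$, and the sole application (Proposition~\ref{zero cohomology}) uses precisely $j = i+1$. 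With that amendment your argument is complete. Your worry about $F$ being singular is handled by the fact that toric varieties are Cohen--Macaulay, so Serre duality with dualizing sheaf $\O_F(K_F)$ applies and toric Kodaira vanishing gives $H^k(F,\O_F(-jA)) = 0$ for $k < q$ and $j \geq 1$, exactly as you need.
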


\begin{proof}
We prove the first assertion by induction on $i$. When $i = 0$, the conclusion follows from Demazure vanishing since $X$ is toric. Suppose the conclusion is true for $i-1$, then we have:
$$0 \ra \O_{X_{i-1}}((j-1)A) \ra \O_{X_{i-1}}(jA) \ra \O_{X_i}(jA) \ra 0$$
$H^m(\O_{X_{i-1}}(jA)) = H^{m+1}(\O_{X_{i-1}}((j-1)A)) = H^{m+1}( O_{X_{i-1}}(jA)) = 0$ by inductive assumption, hence $H^m(\O_{X_i}(jA)) = 0$.
The second assertion is analogous. 
\end{proof}

\begin{prop}\label{zero cohomology}
For all $i \geq 0$:
$$H^{q-i}(X_i,\I_{Z_i/X_i}((i+1)A)) = 0$$
\end{prop}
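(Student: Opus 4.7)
The plan is to peel off $Z_i$ from $X_i$ via the structure sheaf sequence and reduce to a cohomology statement on $Z_i$ that Proposition \ref{x vanishing} can finish. Taking the long exact cohomology sequence attached to
$$0 \to \I_{Z_i/X_i}((i+1)A) \to \O_{X_i}((i+1)A) \to \O_{Z_i}((i+1)A) \to 0,$$
part (1) of Proposition \ref{x vanishing} kills $H^{q-i}(X_i,\O_{X_i}((i+1)A))$ (since $q-i\geq 1$ and $i+1\geq i$), so $H^{q-i}(X_i,\I_{Z_i/X_i}((i+1)A))$ is exhibited as a quotient of $H^{q-i-1}(Z_i,\O_{Z_i}((i+1)A))$.

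For $0\leq i\leq q-2$ one has $q-i-1\geq 1$ and $i+1\geq i$, so part (2) of Proposition \ref{x vanishing} applied with $m=q-i-1$, $j=i+1$ annihilates $H^{q-i-1}(Z_i,\O_{Z_i}((i+1)A))$, and we are done throughout this range. I also expect to be able to interpret the vanishing more structurally via adjunction: the sum of the defining divisors of $Z_i\subset X$, namely $e_1,\ldots,e_{c-1}$, $H=e_c+\cdots+e_m$, and $X_1,\ldots,X_i\in |A|$, equals $-K_X+iA$, so $\omega_{Z_i}=\O_{Z_i}(iA)$; Serre duality then rewrites $H^{q-i-1}(Z_i,\O_{Z_i}((i+1)A))$ as $H^0(Z_i,\O_{Z_i}(-A))^\vee$, which vanishes because $A|_{Z_i}$ is ample on the positive-dimensional Cohen-Macaulay scheme $Z_i$.

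The main obstacle is the boundary case $i=q-1$, where $Z_{q-1}$ is zero-dimensional, Proposition \ref{x vanishing}(2) becomes vacuous and the Serre duality computation no longer yields vanishing. In this case the statement is equivalent to the surjectivity of the restriction map
$$H^0(X_{q-1},qA)\twoheadrightarrow H^0(Z_{q-1},qA).$$
By Bertini (since $X$ is smooth), $X_{q-1}$ is smooth projective and $Z_{q-1}$ is a reduced zero-dimensional complete intersection of the $c=n-q+1$ divisors $e_1|_{X_{q-1}},\ldots,e_{c-1}|_{X_{q-1}},H|_{X_{q-1}}$ in $X_{q-1}$. I would handle this by resolving $\I_{Z_{q-1}/X_{q-1}}$ via the Koszul complex of these defining divisors, twisting by $qA$, and running the hypercohomology spectral sequence; the identity $e_1+\cdots+e_{c-1}+H=-K_X$ puts the top wedge at a canonical-type twist $K_{X_{q-1}}+A|_{X_{q-1}}$, so Kodaira vanishing on the smooth projective $X_{q-1}$ together with the intermediate cases of Proposition \ref{x vanishing} kills the off-diagonal terms of the spectral sequence and forces the required surjectivity.
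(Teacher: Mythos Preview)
For the range $0\le i\le q-2$ your argument is exactly the paper's: both of you feed the ideal-sheaf sequence into the long exact sequence and kill the two flanking terms with parts (1) and (2) of Proposition~\ref{x vanishing}. Your adjunction remark, rewriting $H^{q-i-1}(Z_i,\O_{Z_i}((i+1)A))$ as $H^0(Z_i,\O_{Z_i}(-A))^\vee$ via $\omega_{Z_i}=\O_{Z_i}(iA)$, is correct and pleasant, though the paper does not use it.

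At $i=q-1$ the paper takes a much shorter path than you: it simply notes that $Z_{q-1}$ is zero-dimensional and that $(i+1)A=qA$ is very ample on $X_{q-1}$, and from this (together with $H^1(\O_{X_{q-1}}(qA))=0$) asserts the surjectivity of $H^0(X_{q-1},qA)\to H^0(Z_{q-1},qA)$ directly. Your Koszul/spectral-sequence plan is a reasonable way to try to make this step fully explicit, and your identification of the top wedge with $\O_{X_{q-1}}(K_{X_{q-1}}+A)$ (hence killed by Kodaira, or equivalently by Serre duality and $H^0(-A)=0$) is right. The gap is the appeal to ``intermediate cases of Proposition~\ref{x vanishing}'' for the remaining Koszul terms. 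Those terms are
\[
H^{j}\Bigl(X_{q-1},\,qA-\textstyle\sum_{k\in S}D_k\Bigr),\qquad 1\le j=|S|\le c-1,\ \ S\subsetneq\{e_1,\dots,e_{c-1},H\},
\]
and Proposition~\ref{x vanishing} only concerns sheaves of the form $\O_{X_i}(jA)$ or $\O_{Z_i}(jA)$; it says nothing about such mixed twists by the $e_k$ or $H$. Rewriting $qA-\sum_{k\in S}D_k=K_{X_{q-1}}+A+E$ with $E$ effective does not help either, since $A+E$ is big but in general neither ample nor nef on the (non-toric) $X_{q-1}$, so Kodaira/Kawamata--Viehweg does not apply without further work. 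So as written, your treatment of the boundary case $i=q-1$ is incomplete; you would need a separate vanishing argument for these intermediate twists, or else fall back on the paper's direct very-ampleness argument.
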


\begin{proof}
Consider on $X_i$ the exact sequence:
$$0 \ra \I_{Z_i/X_i}((i+1)A) \ra \O_{X_i}((i+1)A) \ra \O_{Z_{i}}((i+1)A) \ra 0$$
giving rise to:
$$H^{q-i- 1}(\O_{X_{i}}((i+1)A)) \ra H^{q-i-1}(\O_{Z_{i}}((i+1)A)) \ra$$
$$ H^{q-i}(\I_{Z_i/X_{i}}((i+1)A)) \ra H^{q-i}(\O_{X_{i}}((i+1)A))$$
If $q-i = 1$, then $Z_i$ has dimension $\dim Z - i = \dim Z - (q-1) = n - (n+1-q) - (q-1) = q-1 - q+1 = 0$. Then very ampleness and $H^{q-i}(\O_{X_{i}}((i+1)A)) = 0$ from Proposition \ref{x vanishing} implies that $H^{q-i}(X_{i},\I_{Z_i/X_{i}}((i+1)A)) = 0$. Assume $q -i - 1 \geq 1$, then the two ends in the above sequence are 0 because of Proposition \ref{x vanishing} and we get:
$$H^{q-i}(\I_{Z_i/X_{i-1}}((i+1)A)) = H^{q-i-1}(\O_{Z_{i}}((i+1)A)) = 0,$$
as claimed. 
\end{proof}
\begin{defn}\label{adapted}
We say that $Z$ is \textit{adapted} to the data $X, B, A, n, q$, if:
\begin{enumerate}
\item $H^q(X,\I_{Z/X}(B)) \neq 0$
\item For all $i \geq 0$, $H^{q-i}(X_i,\I_{Z_i/X_i}(B+ (i+1)A))) = 0$.
\item For all $i \geq 0$, $Z_i$ has dimension $q-1 -i$. 
\end{enumerate}
\end{defn}

Putting the computations of cohomologies of $Z$ together, we obtain:
\begin{prop}
For any $1 \leq q \leq n$, the scheme $Z$ constructed above is adapted to $X, \O_X, A, n, q$
\end{prop}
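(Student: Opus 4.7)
The plan is to verify the three conditions of Definition \ref{adapted} with $B = \O_X$ by assembling results already proved in this subsection. Note that with $B = \O_X$, the twist $B + (i+1)A$ appearing in condition (2) reduces to $(i+1)A$, which matches the statement of Proposition \ref{zero cohomology} on the nose.

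Condition (1), namely $H^q(X, \I_{Z/X}) \neq 0$, is immediate from Proposition \ref{not zero cohomology}, which in fact identifies this group with $\C$. Condition (2), the vanishing $H^{q-i}(X_i, \I_{Z_i/X_i}((i+1)A)) = 0$ for $i \geq 0$, is exactly Proposition \ref{zero cohomology}. So the only remaining task is condition (3), the dimension count $\dim Z_i = q-1-i$.

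I will verify (3) by induction on $i$. For the base case $i=0$, Lemma \ref{complete intersection} realizes $Z = Z_0$ as a complete intersection of $c = n+1-q$ divisors in the $n$-dimensional variety $X$, so $\dim Z_0 = n - c = q-1$. For the inductive step, $Z_i = Z_{i-1} \cap X_i$ where $X_i \in |A_{i-1}|$ is chosen general; since $A_{i-1}$ is very ample on $X_{i-1}$, a standard Bertini-type argument shows that $X_i$ meets $Z_{i-1}$ properly, so intersecting drops the dimension by exactly one, giving $\dim Z_i = (q-1-(i-1)) - 1 = q-1-i$.

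There is no real obstacle here; the step is essentially bookkeeping. The only minor point to check is that the genericity of $X_i$ requested earlier (to ensure exactness of the restricted diagram (\ref{first diagram})) is compatible with the additional requirement that $X_i$ meet $Z_{i-1}$ properly. Both requirements are open conditions in the very ample linear system $|A_{i-1}|$, so they can be imposed simultaneously, and the proposition follows at once.
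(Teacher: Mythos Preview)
Your proposal is correct and follows essentially the same approach as the paper: conditions (1) and (2) are read off from Propositions \ref{not zero cohomology} and \ref{zero cohomology}, while condition (3) is deduced from the complete intersection description in Lemma \ref{complete intersection}. The paper's proof is a one-line cross-reference, whereas you spell out the inductive dimension count for $Z_i$ and the compatibility of genericity conditions, but there is no substantive difference in strategy.
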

\begin{proof}
Choosing the divisors as in Proposition \ref{complete intersection} , Definition \ref{adapted} (iii) follows from the complete intersection condition. Definition \ref{adapted} (i), (ii) are checked in Proposition \ref{not zero cohomology}, \ref{zero cohomology}.
\end{proof}

Having constructed $Z$, we next turn to the construction of quotients $W$ as in Definition \ref{definition of carrying syzygies}. The issue is to specify inductive conditions that will guarantee that the condition in that definition holds. Recall that 
$$V = H^0(X,L).$$ 
Let 
\begin{equation}\label{v prime}
V' = V \cap H^0(X,I_{\ol{X}/X}(A)).
\end{equation}
The intersection takes place inside $V$. Set $W' = \pi(V')$. Write
\begin{equation}\label{bar}
\ol{V} = V/V', \ \ol{W} = W/W', \ \ol{L} = L |_{\ol{X}}, \ \ol{B} = B |_{\ol{X}}, \ \ol{Z} = Z \cap \ol{X}
\end{equation}

As in \cite[(3.14)]{EL}, we get the analogue of (1.3) above for the barred objects and we have the surjection:
\[\ol{\sigma}: \wedge^{\ol{w}} M_{\ol{V}} \ra I_{\ol{Z}/\ol{X}},\]
so we can study the behavior of $\ol{W}$ with respect to carrying syzygies. 

\begin{lem}\label{inductive lemma}
Fix $1 \leq q \leq n$. If $\ol{W}$ carries weight $q-1$ syzygies for $\ol{B} + \ol{A}$ on $\ol{X}$ and if
\[H^q(X,I_{Z/X}(B+A)) = 0,\]
then $W$ carries weight q syzygies for $B$ on $X$.
\end{lem}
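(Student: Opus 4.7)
The plan is to reduce the surjectivity on $X$ to the inductive hypothesis on $\ol{X}$ via the hyperplane-section technique used in \cite{EL} and \cite{Z}. The idea is to twist both $\I_{Z/X}$ and $\wedge^w M_L$ by $A$, restrict to $\ol{X}$, and form a commutative square in which three of the four arrows are known to be surjective, forcing the fourth.

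On the ideal sheaf side, tensoring $0 \to \O_X(-A) \to \O_X \to \O_{\ol{X}} \to 0$ with $\I_{Z/X}(B+A)$, and using that $\ol{X}$ meets $Z$ properly so that $\I_{Z/X} \otimes \O_{\ol{X}} = \I_{\ol{Z}/\ol{X}}$, yields
$$0 \to \I_{Z/X}(B) \to \I_{Z/X}(B+A) \to \I_{\ol{Z}/\ol{X}}(\ol{B}+\ol{A}) \to 0.$$
The hypothesis $H^q(X, \I_{Z/X}(B+A)) = 0$ then forces the connecting homomorphism $\delta \colon H^{q-1}(\ol{X}, \I_{\ol{Z}/\ol{X}}(\ol{B}+\ol{A})) \twoheadrightarrow H^q(X, \I_{Z/X}(B))$ to be surjective.

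On the bundle side, the parallel sequence
$$0 \to \wedge^w M_L(B) \to \wedge^w M_L(B+A) \to \wedge^w M_L|_{\ol{X}}(\ol{B}+\ol{A}) \to 0$$
furnishes a connecting map $H^{q-1}(\ol{X}, \wedge^w M_L|_{\ol{X}}(\ol{B}+\ol{A})) \to H^q(X, \wedge^w M_L(B))$. To compare with the inductive hypothesis, which involves $\wedge^{\ol{w}} M_{\ol{V}}$, I use the fundamental sequence
$$0 \to V' \otimes \O_{\ol{X}} \to M_L|_{\ol{X}} \to M_{\ol{V}} \to 0,$$
arising from the factorization $V \otimes \O_{\ol{X}} \twoheadrightarrow \ol{V} \otimes \O_{\ol{X}} \twoheadrightarrow \ol{L}$. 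Taking $w$-th wedge powers yields a filtration of $\wedge^w M_L|_{\ol{X}}$ whose graded pieces are $\wedge^j V' \otimes \wedge^{w-j} M_{\ol{V}}$; the piece with $w - j = \ol{w}$ is precisely where $\sigma_\pi|_{\ol{X}}$ factors through $\ol{\sigma}_\pi$, with the complementary tensor factor $\wedge^{w-\ol{w}} V'$ matching the splitting $\wedge^w W = \wedge^{w-\ol{w}} W' \otimes \wedge^{\ol{w}} \ol{W}$ on the target. Assembling the pieces yields a commutative square
$$\begin{array}{ccc}
H^{q-1}(\ol{X}, \wedge^{\ol{w}} M_{\ol{V}}(\ol{B}+\ol{A})) & \twoheadrightarrow & H^{q-1}(\ol{X}, \I_{\ol{Z}/\ol{X}}(\ol{B}+\ol{A})) \\
\downarrow & & \downarrow \delta \\
H^q(X, \wedge^w M_L(B)) & \longrightarrow & H^q(X, \I_{Z/X}(B))
\end{array}$$
(up to the inert twist by $\wedge^{w-\ol{w}} V'$). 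The top arrow is surjective by the hypothesis that $\ol{W}$ carries weight $q-1$ syzygies for $\ol{B}+\ol{A}$, the right is surjective by the previous paragraph, so the bottom is surjective — which is precisely the claim that $W$ carries weight $q$ syzygies for $B$.

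The main obstacle is verifying commutativity of the square and controlling contributions to $H^{q-1}(\ol{X}, \wedge^w M_L|_{\ol{X}}(\ol{B}+\ol{A}))$ from the other graded pieces $\wedge^j V' \otimes \wedge^{w-j} M_{\ol{V}}$ with $j \neq w - \ol{w}$. Both points are handled by the vanishing statements of Proposition \ref{x vanishing} on the intermediate $X_i$ combined with the adapted structure of $Z$; once these identifications are established, the lemma reduces to the diagram chase above, which is the toric-equivariant refinement of the inductive step carried out in \cite[Section 3]{EL} and \cite{Z}.
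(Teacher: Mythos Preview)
Your proposal is correct and follows exactly the approach the paper intends: the paper's own proof simply cites \cite[Thm.~3.10]{EL} with the substitutions $(q-1) \mapsto q$ and $B \otimes L \mapsto B$, and what you have written is precisely a sketch of that argument. One small imprecision: the commutativity of the square and the factorization through the correct graded piece are established by the local computation in \cite[(3.10)]{EL} (using that $\sigma_\pi$ is built from $\pi$ and hence compatible with $\ol{\sigma}$), not by the vanishing of Proposition~\ref{x vanishing}; and the inert tensor factor on the target side is $\wedge^{w-\ol{w}} W'$ rather than $\wedge^{w-\ol{w}} V'$.
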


\begin{proof}
This follows from the same argument as \cite{EL} Thm 3.10 with $(q-1)$ replaced by $q$ and $B \otimes L$ with $B$ in our case.  
\end{proof}

\begin{prop}\label{surjective}
If $d \gg 0$, then the following statements are true and so are their inductive counterparts after cutting down by hyperplanes as above:
\begin{enumerate} 
\item The map $H^0(X,L_d) \ra H^0(Z,L_d)$ is surjective; equivalently: $$H^1(X,I_{Z/X}(L_d)) = 0.$$
\item The map $H^0(Z,L_d)  \ra H^0(\ol{Z},L_d)$ is surjective; equivalently: $$H^1(Z,L_d - A) = 0.$$
\item $H^1(X,I_{Z/X}(L_d-A)) = 0$ (or equivalently, with $W'$ chosen below, the map $V' \ra W'$ is surjective.)
\item The map $H^0(X,L_d)  \ra H^0(\ol{X}, L_d)$ is surjective, or equivalently $$H^1(X,L_d-A) = 0.$$
\item $I_{Z/X} \otimes \O_X(L_d)$ is globally generated.
\end{enumerate}
\end{prop}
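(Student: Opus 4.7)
The plan is to observe that each of the five items reduces, via a standard short exact sequence, to either the vanishing of $H^1$ of a coherent sheaf twisted by $mA$ for $m$ close to $d$, or to the global generation of such a twist. Once translated in this way, every assertion follows for $d \gg 0$ from standard Serre vanishing, Serre's theorem on global generation, or Demazure vanishing (Proposition \ref{Demazure}).

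First I would set up the equivalences. For (1), tensor the sequence $0 \to I_{Z/X} \to \O_X \to \O_Z \to 0$ with $L_d$ and take cohomology. For (2), use that $I_{\ol{Z}/Z} = \O_Z(-A|_Z)$ since $\ol{Z} = Z \cap \ol{X}$ with $\ol{X} \in |A|$, so that the cokernel of $H^0(Z, L_d) \to H^0(\ol{Z}, L_d)$ is controlled by $H^1(Z, (L_d - A)|_Z)$. For (4), use that $I_{\ol{X}/X} = \O_X(-A)$, giving the same cokernel-vs-$H^1$ identification. Items (3) and (5) are already in the form of a vanishing, respectively a global generation, statement.

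Next I would invoke the standard vanishing theorems. Statement (4), $H^1(X, (d-1)A) = 0$, is Proposition \ref{Demazure} with $j = d-1 \geq 0$. Statements (1) and (3) follow from Serre vanishing applied to the coherent sheaf $I_{Z/X}$ on $X$ with respect to the ample line bundle $A$: for any coherent sheaf $\mathcal{F}$ and ample $A$, one has $H^i(X, \mathcal{F}(mA)) = 0$ for $i \geq 1$ and $m \gg 0$. Statement (2) is Serre vanishing for the structure sheaf of $Z$ with respect to $A|_Z$, which is still ample. Statement (5) is Serre's theorem: $I_{Z/X} \otimes \O_X(mA)$ is globally generated for $m \gg 0$.

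Finally, the same Serre-type arguments apply verbatim to the inductive counterparts on $X_i$ and $Z_i$: ampleness of $A$ restricts to ampleness of $A|_{X_i}$ and $A|_{Z_i}$, and the sheaves $I_{Z_i/X_i}$ are coherent. Since the chain $X = X_0 \supset X_1 \supset \cdots \supset X_{q-1}$ has length at most $n$, only finitely many vanishing and global-generation conditions arise, and a single $d \gg 0$ makes all of them hold simultaneously. There is no real obstacle here; the only things to watch are the correct identification of the ideal sheaves in the equivalences and the bookkeeping needed to confirm that only finitely many Serre-type conditions are imposed on $d$.
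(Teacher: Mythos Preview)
Your proposal is correct and is exactly the argument the paper has in mind: the paper's proof is the single line ``These all follow from Serre vanishing,'' and you have simply unpacked this by writing down the relevant short exact sequences and invoking Serre vanishing/global generation (and Demazure for (4)). The additional care you take with the inductive counterparts and the finiteness of the conditions on $d$ is appropriate and matches the intended reasoning.
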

\begin{proof}
These all follow from Serre vanishing.
\end{proof}

\begin{prop}\label{carry syzygies}
Fix $1 \leq q \leq n$. Suppose there exists a subscheme $Z$ of $X$ adapted to $X, B, A, n, q$. Take $W_{d} = H^0(X,\O_Z(L_d))$. Then for $d \gg 0$, $W_{d}$ carries weight $q$ syzygies for $B$.
\end{prop}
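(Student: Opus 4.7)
The argument is by induction on $q$, with the line bundle $B$ treated as a free parameter since Lemma \ref{inductive lemma} replaces $B$ by $\ol{B} + \ol{A}$ when passing to the hyperplane section. The base case $q = 0$ (invoked only via Lemma \ref{inductive lemma}) asserts the surjectivity on $H^0$ of the map induced by $\sigma_\pi$; for $d \gg 0$ this reduces to a Serre-type vanishing applied to the kernel of $\sigma_\pi$, and goes through essentially as in \cite{EL} and \cite{Z}.

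For the inductive step, assume the conclusion for $q-1$ and every auxiliary line bundle. By Lemma \ref{inductive lemma} it suffices to verify two things: first, $H^q(X, \I_{Z/X}(B+A)) = 0$, which is exactly condition (ii) of $Z$ adapted at index $i = 0$; and second, that $\ol{W}_d$ carries weight $q-1$ syzygies for $\ol{B} + \ol{A}$ on $\ol{X}$. The latter we attack by applying the inductive hypothesis on $\ol{X}$, which demands that $\ol{W}_d$ be identified with $H^0(\ol{X}, \O_{\ol{Z}}(\ol{L}_d))$ and that $\ol{Z} = Z \cap \ol{X} = Z_1$ be adapted to $\ol{X}, \ol{B}+\ol{A}, \ol{A}, n-1, q-1$. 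The identification of $\ol{W}_d$ follows for $d \gg 0$ from the vanishings in Proposition \ref{surjective} (parts (2), (3), (4)), which yield compatible surjections relating $V_d, W_d, \ol{V}_d, \ol{W}_d$.

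Verifying that $\ol{Z}$ is adapted: condition (iii) on the dimensions of the $Z_{i+1}$ transfers directly from condition (iii) for $Z$; condition (ii) is the reindexing $i \mapsto i+1$ of condition (ii) for $Z$; and condition (i), namely $H^{q-1}(\ol{X}, \I_{Z_1/X_1}(B+A)) \neq 0$, is the only delicate point. Using transversality of $Z$ with $X_1$, we have a short exact sequence
\begin{equation*}
0 \to \I_{Z/X}(B) \to \I_{Z/X}(B+A) \to \I_{Z_1/X_1}(B+A) \to 0
\end{equation*}
whose long exact cohomology sequence contains
\begin{equation*}
H^{q-1}(\ol{X}, \I_{Z_1/X_1}(B+A)) \to H^q(X, \I_{Z/X}(B)) \to H^q(X, \I_{Z/X}(B+A)).
\end{equation*}
The rightmost term vanishes by adapted (ii) for $Z$ at $i = 0$, while the middle group is nonzero by adapted (i) for $Z$; hence the leftmost surjects onto a nonzero space and is itself nonzero, closing the induction.

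The main obstacle is the bookkeeping surrounding the inductive quotient $\ol{W}_d$: verifying that the restriction of $W_d$ to $\ol{X}$ coincides on the nose with $H^0(\ol{X}, \O_{\ol{Z}}(\ol{L}_d))$, so the inductive hypothesis may be quoted in the exact form required. This rests on the full strength of Proposition \ref{surjective} together with the transversality of successive general members of $|A_i|$ with the diagram (\ref{first diagram}), both of which hold only for $d \gg 0$ and must be threaded through each step of the induction.
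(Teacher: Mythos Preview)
Your argument is correct and follows the same inductive scheme as the paper, but two points deserve comment.

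First, the paper's iteration is slightly leaner than yours. Rather than re-verifying that $\ol{Z}$ is \emph{adapted} to $(\ol{X},\ol{B}+\ol{A},\ol{A},n-1,q-1)$ and then invoking the proposition inductively, the paper simply applies Lemma~\ref{inductive lemma} $q$ times in succession. At the $i$-th step the only hypothesis needed is the vanishing $H^{q-i}(X_i,\I_{Z_i/X_i}(B+(i+1)A))=0$, and this is precisely condition (ii) of ``adapted'' indexed by $i$. Condition (i) is used only once, at the top, to guarantee that the surjection onto $H^q(X,\I_{Z/X}(B))$ is not vacuous; it need not be re-established on each $X_i$. Your verification that $H^{q-1}(\ol{X},\I_{Z_1/X_1}(B+A))\neq 0$ via the short exact sequence is correct, just unnecessary for the logic.

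Second, your treatment of the base case is not quite right. You write that $q=0$ ``reduces to a Serre-type vanishing applied to the kernel of $\sigma_\pi$''; but the kernel of $\sigma_\pi$ depends on $M_{L_d}$ and hence on $d$, so ordinary Serre vanishing does not apply. The actual mechanism, which the paper states explicitly, is that after $q$ hyperplane cuts one has $\dim Z_q = q-1-q = -1$ by condition (iii), i.e.\ $Z_q=\emptyset$ and $\ol{W}=0$. Then $\wedge^0 M_{\ol{L}}\cong\O_{\ol{X}}\cong\I_{\emptyset/\ol{X}}$ and the map is the identity, so the $q=0$ statement is trivially true. (When $\dim X=1$ the paper instead stops at $q=1$ and uses that there is no $H^2$.) Replacing your Serre-vanishing sentence with this observation completes the argument.
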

\begin{proof}
Start with $W = W_d = H^0(Z,L_d)$. By the definitions in equation (\ref{v prime}), (\ref{bar}) and surjectivity from Prop. \ref{surjective}, we have: 
\begin{equation}
\xymatrix{
0 \ar[r] &V'= H^0(X,L_d-A) \ar[r] \ar[d] &  V = H^0(X,L_d) \ar[r] \ar[d] & \ol{V} = H^0(\ol{X},L_d) \ar[d] \ar[r] & 0\\ 
0 \ar[r] & W' \ar[r] \ar[r] \ar[d] & W = H^0(Z,L_d) \ar[r] \ar[d] &  \ol{W} \ar[d] \ar[r] & 0\\
& 0  & 0 & 0\\
}
\end{equation}
where 
\begin{equation}
W' = H^0(Z,L_d-A), \ \ol{W} = H^0(\ol{Z},L_d).
\end{equation}
The sheaf $I_{Z/X} \otimes \O_X(L_d)$ is globally generated (Remark \ref{surjective} (v)) so $Z = \P(W) \cap X$. Moreover, when we cut down by hyperplanes as in Lemma \ref{inductive lemma}, we obtain the corresponding diagrams in lower dimensions. 

We prove the Proposition by induction on $q$. $Z$ is always of dimension $q-1$. When $q = 1, Z$ consists of points. If $X$ is of dimension 1, then surjectivity follows from the fact that sheaf surjective maps imply surjectivity in $H^1$ since there is no $H^2$. If the dimension of $X$ is at least 2, then we continue the induction with $\ol{Z} = \phi, \ol{W} = 0$. So the conclusion is trivially true for $q = 0$. Then the conclusion is true for $q = 1$ by Rmk. \ref{surjective} (ii) and Lemma \ref{inductive lemma}. Then apply Lemma \ref{inductive lemma} repeatedly. 
\end{proof}

\section{Enlarged secant space}

In this section, our key conclusion (Propersition \ref{weights}) is the following. Recall that $L_d = dA$ for some very ample $A$.  Let $\mathrm{wts}(U)$ denote the torus weights in a toric representation $U$. In the setting of syzygies, we prove that there exist torus stable quotient spaces $W_{0,d}, W_{1,d}$:
$$H^0(X,L_d)  \xrightarrow\!\!\!\!\to W_{1,d} \xrightarrow\!\!\!\!\to W_{0,d},$$
with:
$$\dim (W_{0,d}) \in o(d^n), \ \mathrm{and} \ h^0(X,L_d) - W_{1,d} \in o(d^n),$$
such that for any $W_d$ with:
$$\dim W_d = p+q, \ \mathrm{and} \ W_{1,d} \xrightarrow\!\!\!\!\to W_d \xrightarrow\!\!\!\!\to W_{0,d},$$
we have:
$$\mathrm{weights}(\wedge^{p+q}W_d) + \mathrm{some \ fixed \ weight} \subset \mathrm{weights}(K_{p,q}(X;L_d)).$$
As explained in the proof outline in the Introduction, this will let us produce many different weights in $K_{p,q}(X;L_d)$. 

\begin{lem}\label{global generation}
Let $X$ be a scheme with $A$ a very ample divisor and $L_d = dA$. Let $Z$ be a subscheme and $\{E_i\}$ a collection of divisors such that $Z = \cap_i E_i$. Then there exists a subspace $\J_d \subset H^0(X,L_d)$ such that $\J_d$ generates $\I_{Z/X}(L_d)$ and $\dim J_d \in O(1)$. 
\end{lem}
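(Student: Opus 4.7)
The plan is to exploit the identity $\I_{Z/X} = \sum_i \I_{E_i/X}$ coming from the scheme-theoretic intersection $Z = \cap_i E_i$, which reduces the problem to generating each $\I_{E_i/X}(L_d)$ by a bounded number of sections. Since each $\I_{E_i/X}(L_d) \cong \O_X(L_d - E_i)$ is a \emph{line bundle}, generating it as an $\O_X$-module requires no more than $n+1$ sections in general position, no matter how large $\dim H^0(X, L_d - E_i)$ becomes.

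Concretely, for each $i$ I would let $s_i \in H^0(X, \O_X(E_i))$ be the canonical section with zero divisor $E_i$; multiplication by $s_i$ identifies $\O_X(L_d - E_i)$ with the subsheaf $\I_{E_i/X}(L_d) \subset L_d$. Since $A$ is very ample and $\{E_i\}$ is a finite collection, for $d \gg 0$ each $L_d - E_i$ is globally generated. I would then pick a basepoint-free subspace $V_{i,d} \subset H^0(X, L_d - E_i)$ of dimension at most $n+1$: taking $n+1$ general sections, their common zero locus has codimension $n+1$ in the $n$-dimensional $X$, hence is empty. Finally, set
$$\J_d \ = \ \sum_i s_i \cdot V_{i,d} \ \subset \ H^0(X, L_d),$$
so that $\dim \J_d \leq (n+1) \cdot \#\{E_i\}$, which is $O(1)$ since the collection $\{E_i\}$ is fixed.

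To verify that $\J_d$ generates $\I_{Z/X}(L_d)$, the key step is that each $s_i \cdot V_{i,d}$ generates $\I_{E_i/X}(L_d)$. Under the isomorphism induced by $s_i$, the subspace $s_i V_{i,d}$ corresponds to a basepoint-free subspace of sections of the line bundle $\O_X(L_d - E_i)$, and any basepoint-free subspace of a line bundle is globally generating. Summing over $i$ and using $\I_{Z/X}(L_d) = \sum_i \I_{E_i/X}(L_d)$ (the scheme-theoretic intersection condition tensored by $L_d$) then yields global generation of $\I_{Z/X}(L_d)$ by $\J_d$.

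The main thing to keep track of is the uniformity in $d$, but this is really the whole point of the argument rather than an obstacle: generating a line bundle needs only $n+1$ sections, so once $d$ is large enough that every $L_d - E_i$ is globally generated (which is automatic for $d \gg 0$ from very ampleness of $A$ and finiteness of $\{E_i\}$), the ambient $d$-dependent linear systems $H^0(X, L_d - E_i)$ are irrelevant and only the bounded-dimensional $V_{i,d}$'s enter $\J_d$.
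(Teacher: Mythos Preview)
Your proof is correct and shares the paper's opening reduction: write $\I_{Z/X} = \sum_i \I_{E_i/X}$ and handle each divisor separately. The difference is in the bounding step. The paper fixes a large $N$ with $\I_E(L_N)$ globally generated by sections $F_1,\dots,F_{m_1}$, fixes generators $s_1,\dots,s_{m_2}$ of $\O_X(A)$, and for $d>N$ takes $\J_d = \langle F_i \cdot s_j^{\,d-N}\rangle$, giving at most $m_1 m_2$ elements independently of $d$. You instead invoke the standard fact that $n+1$ general sections of a globally generated line bundle on an $n$-dimensional variety have no common zero, obtaining the explicit bound $(n+1)\cdot\#\{E_i\}$.

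One practical advantage of the paper's construction that your argument lacks: it adapts immediately to the torus-equivariant version stated in the Remark following the Lemma, which is what is actually invoked downstream in Proposition~\ref{big W}. If the $F_i$ and $s_j$ are taken to be monomial sections, every product $F_i s_j^{\,d-N}$ is again a monomial, so $\J_d$ is automatically torus-stable. Your $n+1$ ``general'' sections are nontrivial linear combinations of monomials and will not span a torus-stable subspace; moreover one cannot always generate a toric line bundle with only $n+1$ \emph{monomial} sections (for instance $\O_{\P^1\times\P^1}(1,1)$ requires all four), so recovering equivariance from your approach would need a separate uniform bound on the number of monomial generators as $d$ varies.
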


\begin{proof}
If $\J^i_d \subset H^0(X,L_d)$ generates $\I_{E_i}(L_d)$, then $\sum_i \J^i_d$ generate $\I_{Z/X}(L_d)$, so we can assume $Z = E$ is a divisor. For some large $N$, $\I_E(L_N)$ is globally generated by sections $F_1,...F_{m_1}$. Assume $A$ is globally generated by sections $s_1,..,s_{m_2}$. Then for $d > N$, $\I_E(L_d) = \O_X(L_d-E)$ is globally generated by $\J_d = <F_i \cdotp s_j^{d-N}>$. It has finitely many vector space generators, so $\dim \J_d$ is finite. 
\end{proof}

\begin{rmk}
It is straightforward to see that the above lemma is still true if we start with torus equivariant objects and want torus equivariant $\J_d$'s.
\end{rmk}

\begin{prop}\label{big W}
There exist torus equivariant quotients $\W_{0,d}, \W_{1,d}$ of $H^0(X,L_d)$, such that 
\begin{equation}\label{variant of W}
\ol{W_{0,d}} = \ol{W_{1,d}} = \ol{W_d}, \quad Z = X \cap \P(\W_{0,d}) = X \cap \P(W_{1,d})
\end{equation}
and the dimensions satisfy
$$\dim W_{0,d} \in o(h^0(X,L_d))$$ 
and
$$\lim_{d \ra \infty} \frac{ \dim \W_{1,d}}{h^0(X,L_d)} = 1$$ Moreover, $\W_{0,d}$ and $\W_{1,d}$ carry weight $q$ syzygies for $\O_\pp$.
\end{prop}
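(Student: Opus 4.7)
The plan is to construct two torus-equivariant quotients of $V := H^0(X,L_d)$ that share the same image $\ol{W_d} := H^0(\ol{Z},L_d)$ modulo $V' := H^0(X,L_d-A)$, but are of opposite sizes: a ``small'' $W_{0,d}$ of dimension $O(d^{q-1})$ obtained by restricting sections to $Z$, and a ``large'' $W_{1,d}$ whose kernel in $V$ has dimension only $O(d^{n-1})$. Both will carry weight-$q$ syzygies for $\O_X$, either directly from Proposition~\ref{carry syzygies}, or via Lemma~\ref{inductive lemma} applied to $\ol{W_{1,d}} = \ol{W_d}$.

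\textbf{The small quotient.} Take $W_{0,d} := H^0(Z,L_d)$ regarded as a quotient of $V$ via restriction; this is torus-equivariant and surjective for $d \gg 0$ by Prop.~\ref{surjective}(i). Since $\dim Z = q-1 \leq n-1$, we have $\dim W_{0,d} = O(d^{q-1}) \in o(h^0(X,L_d))$. Global generation of $\I_{Z/X}(L_d)$ (Prop.~\ref{surjective}(v)) yields $Z = X \cap \P(W_{0,d})$, and parts (ii)--(iii) of Prop.~\ref{surjective} identify $\ol{W_{0,d}}$ with $H^0(\ol{Z},L_d) = \ol{W_d}$. The syzygy statement for $W_{0,d}$ is Prop.~\ref{carry syzygies}.

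\textbf{The large quotient.} Using Lemma~\ref{global generation} and its equivariant remark, fix a torus-stable $\J_d \subset H^0(X,\I_{Z/X}(L_d))$ of dimension $O(1)$ generating $\I_{Z/X}(L_d)$. For $d \gg 0$, Serre vanishing gives $H^1(X,\I_{Z/X}(L_d-A)) = 0$, so the restriction
\[
H^0(X,\I_{Z/X}(L_d)) \twoheadrightarrow H^0(\ol{X},\I_{\ol{Z}/\ol{X}}(L_d))
\]
is a torus-equivariant surjection with kernel $H^0(X,\I_{Z/X}(L_d-A))$. By reductivity of the torus, choose a torus-equivariant splitting and let $K_1''$ be its image, so $K_1''$ is torus-stable, $K_1'' \cap V' = 0$, and $\dim K_1'' = O(d^{n-1})$. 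Put $K_1 := K_1'' + \J_d$ and $W_{1,d} := V/K_1$. Since $K_1 \subset H^0(X,\I_{Z/X}(L_d))$ and $\J_d \subset K_1$, we obtain $Z = X \cap \P(W_{1,d})$. The identity $K_1 + V' = H^0(X,\I_{\ol{Z}/X}(L_d))$, which holds because $K_1''$ maps isomorphically onto $H^0(\ol{X},\I_{\ol{Z}/\ol{X}}(L_d)) = H^0(X,\I_{\ol{Z}/X}(L_d))/V'$, gives $\ol{W_{1,d}} = H^0(\ol{Z},L_d) = \ol{W_d}$, and $\dim K_1 = O(d^{n-1})$ forces $\dim W_{1,d}/h^0(X,L_d) \to 1$. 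Finally, Lemma~\ref{inductive lemma} delivers the syzygy claim: its vanishing hypothesis $H^q(X,\I_{Z/X}(A)) = 0$ is Definition~\ref{adapted}(ii) at $i=0$ with $B = \O_X$, while the inductive input that $\ol{W_{1,d}} = H^0(\ol{Z},L_d)$ carries weight-$(q-1)$ syzygies on $\ol{X}$ for $\ol{A}$ follows from Prop.~\ref{carry syzygies} applied on $\ol{X}$, once one checks that $\ol{Z}$ is adapted to $(\ol{X},\ol{A},\ol{A},n-1,q-1)$ (parts (ii)--(iii) are immediate from the corresponding parts at level $i+1$ for $Z$; part (i) uses the long exact sequence attached to $0 \to \I_{Z/X} \to \I_{Z/X}(A) \to \I_{\ol{Z}/\ol{X}}(\ol{A}) \to 0$ together with Prop.~\ref{not zero cohomology} and Definition~\ref{adapted}(ii) at $i=0$).

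\textbf{Main obstacle.} The technical crux is the torus-equivariant bookkeeping in Step 2: $K_1$ must simultaneously lie inside $H^0(X,\I_{Z/X}(L_d))$ (to ensure $X \cap \P(W_{1,d}) = Z$), satisfy $K_1 + V' = H^0(X,\I_{\ol{Z}/X}(L_d))$ (to ensure $\ol{W_{1,d}}$ has the prescribed shape), and be torus-stable, all at once. Reductivity of the torus legitimizes the splitting, and Serre vanishing supplies the relevant surjections for $d \gg 0$; with these two ingredients in place the construction separates cleanly and the remaining verifications are routine.
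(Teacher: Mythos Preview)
Your proof is correct and follows essentially the same route as the paper. Both take $W_{0,d}=H^0(Z,L_d)$, and both build $W_{1,d}=V_d/K_1$ by choosing a small torus-stable $K_1\subset J_d=H^0(X,\I_{Z/X}(L_d))$ that (a) contains the $O(1)$-dimensional generating subspace of Lemma~\ref{global generation} and (b) surjects onto $J_d/(J_d\cap V'_d)\cong H^0(\ol X,\I_{\ol Z/\ol X}(L_d))$, which has dimension $O(d^{n-1})$. The only cosmetic difference is that you produce the complement $K_1''$ via a torus-equivariant splitting (reductivity), whereas the paper picks it using a toric weight basis; these are equivalent since $H^0(X,L_d)$ is multiplicity-free. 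Your verification that $W_{1,d}$ carries weight-$q$ syzygies---by checking that $\ol Z$ is adapted to $(\ol X,\ol A,\ol A,n-1,q-1)$ and then invoking Proposition~\ref{carry syzygies} on $\ol X$---is more explicit than the paper's one-line ``by the argument of Proposition~\ref{carry syzygies}'', but amounts to the same inductive step through Lemma~\ref{inductive lemma}.
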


\begin{proof}
Pick $\W_{0,d} = W_d$. This satisfies the conditions.  To construct $\W_{1,d}$, we will vary $W_d$ while keeping $Z$ and $\ol{W_{d}}$ the same, i.e. we look for large dimension quotients $W_{1,d}$ of $H^0(X,L_d)$ such that:
\begin{equation}\label{Z and W bar are the same 1}
Z = X \cap  \P(W_{1,d})
\end{equation}
and
\begin{equation}\label{Z and W bar are the same 2}
\ol{W_{1,d}} = \ol{W_{0,d}}.
\end{equation}
By the argument of Proposition \ref{carry syzygies}, if the $W_{1,d}$ satisfy the above conditions, then the $W_{1,d}$ also carry weight $q$ syzygies. We first constuct $W_{1,d}$ such that they satisfy the conditions in (\ref{Z and W bar are the same 1}) and (\ref{Z and W bar are the same 2}) and then do a dimension count. 

Let $W_{1,d}$ be the quotient of $V_d = H^0(X,L_d)$ by $ \mathcal{J}_{1,d}$, i.e. 
$$W_{1,d} = V_d / \mathcal{J}_{1,d}.$$
To satisfy (\ref{Z and W bar are the same 1}), $\J_{1,d}$ has to generate $\I_{Z/X}(L_d)$. By Lemma \ref{global generation}, we can pick torus equivariant $\J_{1,d}$ of bounded dimension. To satisfy (\ref{Z and W bar are the same 2}), for the consecutive quotient maps: 	
$$V_d \rightarrow \W_{1,d} \rightarrow W_{d},$$
we need:
\begin{equation}\label{J_d in lower}
V_d'+J_d = V_d'+ \J_{1,d}
\end{equation}
where 
$$V_d' = \ker(V_d \ra \ol{V_d}), \quad J_d = \ker(V_d \ra W_d).$$
We start with a toric basis of $V_d$, then $V'_d$, $J_d$ both have induced toric basis elements. Denote by $B(V'_d)$  the toric basis elements of $V'_d$, and   $B(J_d)$ those of $J_d$. Then we can choose $\J_{1,d}$ to have a basis containing $B(V'_d) - B(J_d)$, but contained in $B(V'_d) \cup B(J_d)$.  $\J_{1,d}$ will be toric equivariant and we have the dimension count:
\begin{equation*}
\begin{aligned}
\dim(V_d'+J_{d}) - \dim V_d' = \dim V_d - \dim \ol{W_d} - \dim V_d' \\
= \dim \ol{V_d} - \dim \ol{W_d} \\
\leq \dim \ol{V_d}.
\end{aligned}
\end{equation*}
Hence (\ref{J_d in lower}) requires the $\J_{1,d}$ to be appropriate subspaces of $V_d'+ \J_{d}$ with the following range of dimensions:
\begin{equation*}
\dim \ol{V_d} \ \leq \ \dim \J_{1,d}\  \leq \ \dim(V_d'+J_{d}).
\end{equation*}
Note that $\dim \ol{V_d} \in o(d^n)$. Therefore, to satisfy both  (\ref{Z and W bar are the same 1}) and (\ref{Z and W bar are the same 2}), we can choose $\J_{1,d}$ such that $\dim \J_{1,d} \in o(d^n) + O(1)$. Since $\W_{1,d} = V_d / \J_{1,d}$, $W_{1,d}$ will be torus equivariant and we have
$\dim \W_{1,d} = \dim V_d  - \dim \J_{1,d}$. Then $h^0(X,L_d) \in \Theta(d^n)$ imply that $$\lim_{d \ra \infty} \frac{ \dim \W_{1,d}}{h^0(X,L_d)} = 1.$$
\end{proof}
Finally, we conclude:
\begin{prop}\label{weights}
For any torus equivariant $\W_d$ that fits in the following diagram of consecutive equivariant quotient maps:
$$\W_{1,d} \ra \W_d \ra \W_{0,d}$$
we have:
\begin{equation}\label{contained weights}
\mathrm{wts}(\bigwedge^{\dim \W_{d}} \W_{d}) + \mathrm{wts}(H^q(X, I_{Z/X})) \subseteq \mathrm{wts}(K_{\dim \W_{d} - q,q}(\pp;L_d))
\end{equation}
\end{prop}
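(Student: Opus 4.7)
The plan is to reduce the statement to a direct application of Lemma \ref{weight inclusion} by verifying that every intermediate quotient $W_d$ between $W_{1,d}$ and $W_{0,d}$ also carries weight $q$ syzygies, so that the surjective torus-equivariant map of that lemma can be used to transport weights from the target to $K_{w-q,q}(X;L_d)$.

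First I would translate the quotient chain into a chain of toric linear subspaces. The surjections $W_{1,d} \twoheadrightarrow W_d \twoheadrightarrow W_{0,d}$ yield inclusions $\P(W_{0,d}) \subseteq \P(W_d) \subseteq \P(W_{1,d})$, and intersecting with $X$ gives
$$Z \;=\; X \cap \P(W_{0,d}) \;\subseteq\; X \cap \P(W_d) \;\subseteq\; X \cap \P(W_{1,d}) \;=\; Z,$$
using both ends of (\ref{variant of W}) from Proposition \ref{big W}. Hence $X \cap \P(W_d) = Z$. Moreover, because $\ol{W_d}$ is sandwiched between the equal spaces $\ol{W_{0,d}}$ and $\ol{W_{1,d}}$, we have $\ol{W_d} = \ol{W_{0,d}}$, so the inductive diagram used in the proof of Proposition \ref{carry syzygies} applies verbatim to $W_d$ with the same $Z$ and the same $\ol{W}$.

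Next I would invoke the inductive machinery. The argument of Proposition \ref{carry syzygies} (as already used inside the proof of Proposition \ref{big W} to handle $W_{1,d}$) depends only on $Z$ being adapted to the data and on the configuration of $W', W, \ol{W}$ relative to $V', V, \ol{V}$; since these conditions hold for $W_d$ by the previous paragraph and Proposition \ref{surjective}, we conclude that $W_d$ carries weight $q$ syzygies for $\O_X$. By Definition \ref{definition of carrying syzygies} this is exactly the surjectivity of the torus-equivariant map
$$H^q(X, \wedge^{w} M_{L_d}) \twoheadrightarrow H^q(X, \I_{Z/X}) \otimes \wedge^{w} W_d,$$
where $w = \dim W_d$.

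Finally, since this is a surjection of torus representations, every weight occurring on the right appears on the left; the weights on the right are exactly $\mathrm{wts}(H^q(X,\I_{Z/X})) + \mathrm{wts}(\wedge^{w} W_d)$. Using Proposition \ref{cohomology} to identify the left with $K_{w-q,q}(X;L_d)$ yields the desired containment (\ref{contained weights}). The only subtle point, and the step I would be most careful about, is confirming that the inductive cutting-down by hyperplanes in Proposition \ref{carry syzygies} really does go through for the intermediate $W_d$: this is why it is important that the reduction modulo $\ol{X}$ sends $W_d$ to the same $\ol{W}$ as $W_{0,d}$ and $W_{1,d}$, so that the induction step (Lemma \ref{inductive lemma}) has the same hypothesis to chew on in lower dimension.
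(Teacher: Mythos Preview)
Your proposal is correct and follows essentially the same approach as the paper: invoke the argument of Proposition \ref{carry syzygies} to show that the intermediate $W_d$ carries weight $q$ syzygies, then apply Lemma \ref{weight inclusion} to obtain the weight inclusion. You have simply made explicit the sandwiching checks ($X\cap\P(W_d)=Z$ and $\ol{W_d}=\ol{W_{0,d}}$) that the paper leaves implicit in the phrase ``by the argument of Proposition \ref{carry syzygies}.''
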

\begin{proof}
By the argument of Proposition \ref{carry syzygies} , $\W_d$ carries weight $q$ syzygies. Then the weight inclusions follow from Lemma \ref{weight inclusion}.
\end{proof}

\section{Asymptotics of normalized weights}

In this section, we prove the main result. As explained in the Introduction, one of the issues is to study torus weights of $\wedge^{p}W_d$ given the weights of $W_d$. Recall that we defined in the Introduction:
\[  \mathrm{wts}(K_{p,q}(X;L_d)) = \{ \mathrm{Torus \ weights \ of \ }K_{p,q}(X;L_d) \} \subseteq (p+q)d \cdotp \Delta,\] 
where $\Delta$ is the convex polytope associated with the very ample divisor $A$. 
We are interested in the normalized weights:
\[  \mathrm{wts^{nor}} (K_{p,q}(X;L_d)) = \frac{\mathrm{wts}(K_{p,q}(X;L_d)) }{(p+q)d} \subseteq \Delta\]
In this section, we work asymptotically and we are interested in asymptotic closures:
\[ \Delta(a, b) \  = \overline{\bigcup_{\substack{d  \gg  0 \\ a \cdot r_d  \leq   p  \leq   b \cdot  r_d}} \mathrm{wts^{nor}}(K_{p,q}(X;L_d))}\subseteq \ \Delta \]
Keeping notation from previous sections, the contributions from weights in $H^q(X,I_{Z/X})$ and $W_{0,d}$ will be asymptotically insiginificant to normalized weights, in other words:
\[  \Delta(a,b) \supseteq \overline{\bigcup_{\substack{d \  \gg \  0 \\ a \cdot r_d \ \leq \  p \ \leq \  b \cdot  r_d}} \mathrm{wts^{nor}}\left(\bigwedge^{\dim \W_{d}} \W_{d} + H^q(X, I_{Z/X})\right)} =  \overline{\bigcup_{\substack{d \  \gg \  0 \\ a \cdot r_d \ \leq \  p \ \leq \  b \cdot  r_d}} \mathrm{wts^{nor}}\left(\bigwedge^p W_{1,d}\right)}\]
Hence, to give a lower bound for $\Delta(a,b)$, we can simply work with the sequence $\{W_{1,d}\}$ in (\ref{contained weights}) and for simplicity, we abuse notation and write $W_d$ instead of $W_{1,d}$. 

After this simplication, we arrive at the following setting. 
Let $\Delta \subseteq \R^n$ be the convex polytope associated to a very ample divisor $A$ on $X$. For $d \in \mathbb{N}$, let 
$$\W_d \subset d\Delta \cap \Z^n$$ 
be a subset. 
Let us use $\bigwedge^{p_d} \W_d$ to denote the collection of points in $\Z^n$ expressible as nonrepetitive sums of $p_d$ points in $\W_d$. Assume that for $d \in \mathbb{N}$,
\begin{equation}\label{limit}
\mathrm{lim}_{d \rightarrow \infty} \frac{|\W_d|}{|d\Delta|} = 1
\end{equation}

Take any point $x$ inside the polytope $\Delta$. For simplicity, we call the finite union of cubes contained in $\Delta$ a shape.  Recall that:
$$\tau_x = \mathrm{sup} \ \{ \vol(\mathbb{S}_x)\  | \ \mathbb{S}_x \mathrm{\ is \  a\ shape \subset \Delta}, \mathrm{\ center \ of \ mass \ of} \ \mathbb{S}_x = x\}$$

The intuition for the two key lemmas (\ref{dense in a ball}, \ref {upper bound}) in this section are as follows. 
\begin{enumerate}
\item If there is a shape of volume $\eta_x$ centered at $x$, we can pick any number between 1 and $(\eta_x - \epsilon)d^n$ such that the the average of these weights lie arbitrarily close to $x$ asymptotically. 
\item Any sequence of subsets of lattice weights averaging to $x$ cannot exceed $\tau_x$ many elements. 
\end{enumerate}
\begin{lem}\label{dense in a ball}
With the above notations, for any sequence 
$$1 \leq p_d \leq (\eta_x - \epsilon)d^n$$
and any open set $U_x \subset \mathbb{R}^n$ containing $x$, there exists $d_0$ such that for all $d > d_0$, $U_x$ contains a point of 
$$\frac{\mathrm{wts}^{\mathrm{nor}}(\bigwedge^{p_d}\W_d)}{p_d \cdotp d}$$
\end{lem}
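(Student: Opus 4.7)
First, note that since $S_x \subset \Delta$ has positive volume $\eta_x > 0$ and centroid $x$, the point $x$ must lie in the interior of $\Delta$; pick $r_0 > 0$ with $B(x, r_0) \subset \Delta$. Without loss of generality assume $U_x = B(x, \delta)$ with $\delta < r_0$. The overall strategy is to produce $p_d$ distinct elements of $W_d$ whose mean lies in $B(dx, \delta d)$, split into a \emph{bulk} regime where a shrunken copy of $S_x$ supplies the subset and a \emph{bounded} regime handled by lattice points clustered near $dx$.

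For the bulk regime, shrink $S_x$ about $x$ by the factor $\mu := \bigl((\eta_x - \epsilon/2)/\eta_x\bigr)^{1/n} \in (0,1)$, obtaining $T := x + \mu(S_x - x) \subset \Delta$ (convexity), still a finite union of cubes with centroid $x$ and $\vol(T) = \eta_x - \epsilon/2$. Standard Ehrhart/Riemann-sum estimates for shapes yield
\[
|dT \cap \Z^n| = (\eta_x - \epsilon/2)\,d^n + O(d^{n-1}), \qquad \sum_{w \in dT \cap \Z^n} w = (\eta_x - \epsilon/2)\, d^{n+1}\cdot x + O(d^n).
\]
The hypothesis $|W_d|/|d\Delta \cap \Z^n| \to 1$ means only $o(d^n)$ lattice points of $d\Delta$ are absent from $W_d$, so for $d \gg 0$ one has $|W_d \cap dT| \geq (\eta_x - \epsilon)\,d^n \geq p_d$, and the mean $\bar{m}_d := |W_d \cap dT|^{-1} \sum_{w \in W_d \cap dT} w$ satisfies $\bar{m}_d = dx + o(d)$.

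To extract a specific subset of size $p_d$ with controlled mean, use the probabilistic method. A uniformly random $p_d$-subset $S$ of $W_d \cap dT$ has expected mean $\bar{m}_d$, and by the variance formula for sampling without replacement, the variance of its mean in each coordinate is at most $\mathrm{diam}(dT)^2/p_d = O(d^2/p_d)$. Chebyshev with a union bound over the $n$ coordinates produces a realization $S$ with $|\mathrm{mean}(S) - \bar{m}_d| = O(\sqrt{n}\,d/\sqrt{p_d})$. Choose $P$ large enough that this bound forces $|\mathrm{mean}(S) - \bar{m}_d| \leq \delta d/2$; then for every $d$ with $p_d > P$ and $d$ large, $\mathrm{mean}(S)$ lies within $\delta d$ of $dx$, so the normalized weight $\mathrm{mean}(S)/d$ sits in $B(x, \delta) = U_x$.

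For the remaining indices $d$ with $p_d \leq P$, the probabilistic bound is too weak, but here the interior-of-$\Delta$ observation takes over: $|W_d \cap B(dx, \delta d)| \geq |B(dx, \delta d) \cap \Z^n| - o(d^n) \to \infty$, so for $d$ large we simply select $p_d$ distinct points from $W_d \cap B(dx, \delta d)$, whose mean lies in $B(dx, \delta d)$ automatically. Taking the larger of the two thresholds on $d$ yields the required $d_0$. The main technical obstacle is coupling the density hypothesis --- a \emph{global} statement about $W_d$ inside $d\Delta$ --- with the \emph{local} count and first moment of $W_d$ inside $dT$; the key device is the $\epsilon/2$ slack built into the shrinkage $\mu$, which both keeps $|W_d \cap dT|$ above $p_d$ despite the $o(d^n)$ missing lattice points and absorbs the Riemann-sum error in the first-moment estimate. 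Everything else (Chebyshev and the interior-ball argument) is routine once this slack is in place.
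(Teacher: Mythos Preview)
Your argument is correct and follows the same two-regime split the paper sketches: use (a scaled copy of) the shape $S_x$ to supply points when $p_d$ is in the bulk range, and fall back on a small ball about $x$ when $p_d$ is bounded. The paper's proof is explicitly only a sketch and does not explain how one lands on an \emph{exact} cardinality $p_d$ while keeping the mean close to $dx$; you fill this gap with a second-moment argument (uniform sampling without replacement from $W_d\cap dT$ plus Chebyshev), whereas the sketch implicitly takes \emph{all} lattice points in a suitably rescaled copy of $S_x$ and would then need to adjust by lower-order many points. Your probabilistic device handles the entire range $P<p_d\le(\eta_x-\epsilon)d^n$ uniformly, without interpolating through a one-parameter family of shrinkages, and it is insensitive to where the $o(d^n)$ missing points of $W_d$ sit. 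One minor remark: your parenthetical ``(convexity)'' for $T\subset\Delta$ refers to convexity of $\Delta$ (each point of $T$ is a convex combination of $x$ and a point of $S_x$), not of $S_x$ or $T$; this is fine but worth stating to avoid confusion.
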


\begin{proof}[Sketch of Proof]
Denote by $S_x$ a finite union of cubes with volume $\eta_x$, center of mass $x$. If we take lattice points in multiples of $S_x$, take the average and normalize, then the average weight converges to $x$ by the definition of center of mass. If the $p_d$ are small and we take integral points in smaller balls, the same approximation works.
\end{proof}

\begin{thm}\label{dense}
Fix $1 \leq q \leq n$. Then
$$\bigcup_{\substack{d \ > \ 0 \\ 1 \leq \ p \leq \  r_d}} \mathrm{wts^{nor}}(K_{p,q}(X;L_d))$$ 
is dense in $\Delta$.
\end{thm}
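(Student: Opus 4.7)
The plan is to combine Lemma \ref{dense in a ball} with Propositions \ref{big W} and \ref{weights}. Via the reduction set up at the start of Section 4, the normalized weights of $K_{p,q}(X;L_d)$ contain, up to an asymptotically negligible shift, all normalized $(p+q)$-fold sums of torus weights of $W_{1,d}$. The extra contributions come from the fixed weight $\mu \in \mathrm{wts}(H^q(X,\I_{Z/X}))$ and from the $\dim W_{0,d} = o(d^n)$ weights of $W_{0,d}$ which must always be among the chosen summands (forced by the factoring $W_{1,d} \to W \to W_{0,d}$ in Proposition \ref{weights}); each lives in $d \cdot \Delta$, so their total contribution to the unnormalized sum is $o(d^{n+1})$, and since $(p+q)d = \Theta(d^{n+1})$ whenever $p \sim c \, d^n$, the normalized shift is $o(1)$. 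So it suffices to prove that $\bigcup_{d,p} \mathrm{wts^{nor}}(\bigwedge^p \W_d)$ is dense in $\Delta$, where $\W_d$ denotes the set of torus weights of $W_{1,d}$.

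Since $\Delta$ is the closure of its interior, I would fix $x \in \mathrm{int}(\Delta)$ and an arbitrary open neighborhood $U_x \ni x$, and show $U_x$ meets this union. Interiority gives $\tau_x > 0$, since any small cube centered at $x$ inside $\Delta$ is a valid shape. Pick $\alpha \in (0, \tau_x)$ and set $p_d = \lfloor \alpha d^n \rfloor$, so that $p_d \to \infty$, $p_d \leq (\tau_x - \epsilon) d^n$ for some $\epsilon > 0$, and $p_d < r_d \sim \vol(\Delta)\, d^n$ for large $d$. Viewing $\W_d \subset d\Delta \cap \Z^n$, Proposition \ref{big W} combined with the toric identity $h^0(X,L_d) = |d\Delta \cap \Z^n|$ gives $|\W_d|/|d\Delta| \to 1$, so the hypothesis of Lemma \ref{dense in a ball} is met. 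Applying the lemma to a slightly shrunk neighborhood $U_x' \subset U_x$ (chosen to accommodate the $o(1)$ shift from the previous paragraph) produces, for all large $d$, a point of $\mathrm{wts^{nor}}(\bigwedge^{p_d}\W_d)$ inside $U_x'$; taking $p = p_d - q$ (which lies in $[1, r_d]$ for large $d$), this yields a normalized weight of $K_{p, q}(X; L_d)$ inside $U_x$, proving density.

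The main obstacle is the bookkeeping in the reduction from $K_{p,q}$ to $\bigwedge^p W_{1,d}$: Proposition \ref{weights} only produces weights in $K_{p,q}$ from dimension-$(p+q)$ quotients $W$ of $W_{1,d}$ that further factor through $W_{0,d}$, so the $W_{0,d}$-weights must be present among the summands in any realized selection; Lemma \ref{dense in a ball} on the other hand outputs unconstrained selections. I would reconcile these by applying the lemma with exponent $p_d - \dim W_{0,d}$ to the lattice set $\W_d \setminus \mathrm{wts}(W_{0,d})$ (still satisfying the density hypothesis, since $\dim W_{0,d} = o(d^n)$), and then appending the $W_{0,d}$-weights to obtain a selection of exactly $p_d$ distinct weights including the forced ones. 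The resulting perturbation of the normalized sum, together with the $\mu$-shift, is $o(1)$ by the estimate in the first paragraph, and is absorbed by the preliminary shrinking $U_x' \subset U_x$. With this reconciliation in place, the rest of the argument proceeds as described.
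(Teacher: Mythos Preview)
Your proposal is correct and follows essentially the same approach as the paper: combine Propositions \ref{big W} and \ref{weights} to reduce to normalized weights of $\bigwedge^{p_d} W_{1,d}$, then invoke Lemma \ref{dense in a ball}. The paper's own proof is a terse two sentences that sweeps under the rug exactly the bookkeeping you spell out (the fixed $\mu$-shift, the forced $W_{0,d}$-weights, and the passage to interior points where $\tau_x>0$), so your write-up is in fact a more complete version of the same argument.
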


\begin{proof}
By Proposition \ref{big W} and \ref{weights}, we know that $\mathrm{wts}^{\mathrm{nor}}(K_{p,q}(\P;L_d))$ contains weights corresponding to nonrepetitive sums of weights in $\W_{1,d}$. By Proposition \ref{big W} and Lemma \ref{dense in a ball}, they are dense in $\Delta$. 
\end{proof}

\begin{lem}\label{upper bound}
Take $y \in \Delta$. Assume
\begin{equation}\label{y as a limit}
y = \lim_{d \ra \infty} y_d
\end{equation}
where $y_d$ is an average of  $w_{d,j} \in (d \cdotp \Delta) \cap \Z^n$, i.e.:
$$ y_d = \frac{1}{i_d}\sum_{1 \leq j \leq i_d} \frac{w_{d,j}}{d}, \quad w_{d,j} \ \mathrm{distinct}, \quad \mathrm{and} \  i_d = |\{w_{d,j}\}| .$$
Then for any subsequence $\{i_{d_j}\}$:
$$\mathrm{limsup}_{j \ra \infty} \frac{i_{d_j}}{d^n} \leq \tau_y$$
\begin{proof} 
Suppose there is a subsequence $i_{d_j'}$ such that 
$$\mathrm{liminf}_{j \ra \infty} \frac{i_{d_j'}}{d^n} \geq \tau.$$
We claim that for any constant $\epsilon > 0$, there is a shape centered at $y$ with volume at least $\tau - \epsilon$. 
Assuming the claim, the lemma follows since if we had 
$$\mathrm{limsup}_{j \ra \infty} \frac{i_{d_j}}{d^n} = \tau > \tau_y,$$ then there is a subsequence $i_{d_j'}$ such that 
$$\mathrm{liminf}_{j \ra \infty} \frac{i_{d_j'}}{d^n} = \frac{\tau+\tau_y}{2} > \tau_y$$
By the claim, this is a shape with center of mass at $y$ and volume $\frac{\tau+\tau_y}{2}> \tau_y$. This is a contradiction to $\tau_y$ being the biggest volume supported at $y$.  

Let's turn to the claim. For convenience, we write $i_j$ for $i_{d_j}$. We construct a shape centered at $y$ with volume at least $\tau - \epsilon$ by taking small cubes centered at $w_{d_j,i}$, then make two adjustments: boundary adjustment and center adjustment. 

More specifically, for each $w_{d_j,i}$ with $1 \leq i \leq d_j$, take the $1 \times ... \times 1$ cubes of dimension $n$ centered at $w_{d_j,i}$. Denote each cube by $C_{d_j,i}$. Suppose the $2 \times ... \times 2$ cube centered at $w_{d_j,i}$ does not intersect the boundary of $d \Delta$ for $ 1 \leq i \leq i'_d$. The boundary of $d\Delta$ is bounded by $d (1-\epsilon) \Delta$ and $d (1+\epsilon) \Delta$ for large $d$ and any $\epsilon >0$. Hence, $i_d - i'_d \in o(d^{n})$.

Take the union:
$$\Sigma'_d= \bigcup_{i = 1}^{i'_d} C_{d_j,i}$$
and denote by $y'$ the center of mass for $\Sigma'_d$. $\Delta$ is bounded, so by the previous paragraph and assumption:
$$|y' - y| \leq |y' - y_d| + |y_d - y| \in o(d^n) + O(1).$$

Move $\Sigma'_d$ by: 
$$\frac{y - y'}{i'_d}$$
Then it is straightforward to see that for large $d$, the shape above is contained in $\Delta$ and has volume arbitrarily close to $\tau$. 
\end{proof}
\end{lem}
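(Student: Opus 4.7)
I would argue by contradiction, as already suggested in the statement. Suppose on the contrary that some subsequence $\{i_{d_j}\}$ satisfies $\limsup_{j \to \infty} \frac{i_{d_j}}{d_j^n} > \tau_y$; passing to a further subsequence, I may assume $\liminf_{j \to \infty} \frac{i_{d_j}}{d_j^n} = \tau > \tau_y$. The whole proof then reduces to producing, for an arbitrary $\epsilon > 0$, a shape $S \subseteq \Delta$ with $\vol(S) \geq \tau - \epsilon$ and center of mass exactly $y$: such an $S$ contradicts the definition of $\tau_y$ as the supremum over shapes centered at $y$.

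The basic construction converts the configuration $\{w_{d_j,i}\}_{i=1}^{i_{d_j}}$ of distinct lattice points into a shape by placing an axis-aligned unit cube $C_{d_j,i}$ at each $w_{d_j,i}$ and then rescaling by $1/d_j$. Because the lattice points are distinct, the cubes have disjoint interiors, so the unscaled union $\Sigma_{d_j} = \bigcup_i C_{d_j,i}$ has volume $i_{d_j}$ and center of mass $d_j y_{d_j}$; after scaling by $1/d_j$ the volume becomes $i_{d_j}/d_j^n$ (which along the chosen subsequence tends to $\tau$) and the center of mass becomes $y_{d_j}$, which by hypothesis tends to $y$. So up to two defects — some cubes may exit $d_j \Delta$, and the center of mass is $y_{d_j}$ rather than exactly $y$ — the scaled $\Sigma_{d_j}$ is already the desired shape.

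The two defects are handled in turn. For the boundary defect, I would discard every index $i$ for which $C_{d_j,i}$ meets a tubular neighborhood of $\partial(d_j \Delta)$ of width $\rho_{d_j}$, for a sequence satisfying $\rho_{d_j} \to \infty$ and $\rho_{d_j}/d_j \to 0$ (to be pinned down below). Since $\Delta$ is a rational polytope, the number of discarded indices is $O(d_j^{n-1} \rho_{d_j}) = o(d_j^n)$, so the scaled volume of the remaining union $\Sigma'_{d_j}$ still tends to $\tau$ and its scaled center of mass $y'_{d_j}$ still satisfies $|y'_{d_j} - y| \to 0$; moreover the scaled $\Sigma'_{d_j}$ now sits at distance at least $\rho_{d_j}/d_j$ from $\partial \Delta$. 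For the center-of-mass defect, I would simply translate the scaled $\Sigma'_{d_j}$ by $y - y'_{d_j}$; this preserves the volume and moves the center of mass to exactly $y$.

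The only technical point that requires care, and hence the main obstacle, is coordinating the two corrections so that the translated shape still lies inside $\Delta$. This amounts to ensuring the translation length $|y - y'_{d_j}|$ does not exceed the cushion $\rho_{d_j}/d_j$ gained by boundary pruning. Both $|y - y_{d_j}| \to 0$ and the perturbation from pruning tend to $0$, so $|y - y'_{d_j}|$ goes to $0$ at some unspecified rate; a diagonal choice of $\rho_{d_j}$ (growing slowly enough that $\rho_{d_j} = o(d_j)$ but fast enough that $\rho_{d_j}/d_j$ dominates $|y - y'_{d_j}|$) resolves this. Everything else is bookkeeping, and the resulting shape of volume at least $\tau - \epsilon$ with center of mass $y$ contradicts the definition of $\tau_y$.
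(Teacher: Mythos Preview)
Your proposal is correct and follows essentially the same approach as the paper: argue by contradiction, replace the distinct lattice points by unit cubes, prune the cubes near $\partial(d\Delta)$ (the paper uses a fixed-width buffer where you use a growing one $\rho_{d_j}$), and then translate to move the center of mass to exactly $y$. Your write-up is in fact more explicit than the paper's about the one genuine subtlety---coordinating the buffer width with the required translation so that the shifted shape remains inside $\Delta$---which the paper dispatches with ``it is straightforward to see.''
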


Recall that we define:
\[ \Delta (a, b) \  = \overline{\bigcup_{\substack{d  \gg   0 \\ a \cdot r_d  \leq p  \leq  b \cdot  r_d}} \mathrm{wts^{nor}}(K_{p,q}(\PP;L_d))}\subseteq \ \Delta \]

\begin{thm}\label{delta a b} One has:
$$\Delta(a,b) = \left \{x \in \Delta \  \bigg| \  \frac{\tau_x}{\vol(\Delta)} \geq a \right \} =: \Delta(a)$$ 
\end{thm}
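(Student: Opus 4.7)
The plan is to prove $\Delta(a,b) \subseteq \Delta(a)$ and $\Delta(a) \subseteq \Delta(a,b)$ separately; notice that the lower-bound argument will never use the constraint $p \leq b r_d$, so the independence of $\Delta(a,b)$ from $b$ drops out for free. Throughout I use Ehrhart's formula $r_d = \vol(\Delta) d^n + O(d^{n-1})$, which converts $a r_d \leq p \leq b r_d$ into the asymptotic $a\vol(\Delta) + o(1) \leq p/d^n \leq b\vol(\Delta) + o(1)$ matching the $d^n$-scale used in Lemmas~\ref{dense in a ball} and~\ref{upper bound}.

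\textbf{Upper bound.} For $y \in \Delta(a,b)$, write $y = \lim y_k$ where $y_k = w_k/((p_k+q) d_k)$ with $w_k$ a weight of $K_{p_k,q}(X;L_{d_k})$ and $a r_{d_k} \leq p_k \leq b r_{d_k}$. Via the Koszul complex computing $K_{p,q}$, each such $w_k$ is a sum of $p_k+q$ lattice points of $d_k\Delta \cap \Z^n$, at least $p_k$ of which are distinct (coming from the $\wedge^{p_k} H^0(X,L_{d_k})$ factor). Asymptotically the remaining $q$ points are negligible, so $y_k$ is, up to $o(1)$, the average in Lemma~\ref{upper bound} with $i_{d_k} = p_k$. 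That lemma gives $\tau_y \geq \limsup p_k/d_k^n$, which combined with $p_k \geq a r_{d_k}$ yields $\tau_y \geq a\vol(\Delta)$, i.e.\ $y \in \Delta(a)$.

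\textbf{Lower bound.} First suppose $\tau_y > a\vol(\Delta)$. Pick $\epsilon > 0$ and a shape $\BS_y \subseteq \Delta$ centered at $y$ of volume $\eta_y$ with $a\vol(\Delta) + 2\epsilon < \eta_y \leq \tau_y$; by rescaling $\BS_y$ toward $y$ we may arrange $\eta_y < b\vol(\Delta)$ as well. For $d$ large, choose $p_d$ with $a r_d \leq p_d \leq b r_d$ and $p_d \leq (\eta_y - \epsilon)d^n$. By Propositions~\ref{big W} and~\ref{weights} there exist torus-stable quotients $W_{1,d}$ with $\dim W_{1,d}/h^0(X,L_d) \to 1$ and an inclusion of normalized weights of $\bigwedge^{p_d+q} W_{1,d}$, shifted by the bounded character of $H^q(X,\I_{Z/X})$, into $\mathrm{wts}^{\mathrm{nor}}(K_{p_d,q}(X;L_d))$; since $\dim H^q(X,\I_{Z/X})$ and $\dim W_{0,d}$ are $o(d^n)$, this shift vanishes after normalization. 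Lemma~\ref{dense in a ball} applied to $\{W_{1,d}\}$ then places a normalized weight of $\bigwedge^{p_d+q} W_{1,d}$ in any prescribed neighborhood of $y$, so $y \in \Delta(a,b)$. For the boundary case $\tau_y = a\vol(\Delta)$, I use that $\Delta(a,b)$ is closed by definition and argue that every such $y$ is approximable by points $y'$ with $\tau_{y'} > a\vol(\Delta)$: starting from an almost-optimal shape at $y$, the residual volume $(1-a)\vol(\Delta) > 0$ inside $\Delta$ leaves room to attach a small additional cube, and by choice of its location the new center of mass $y'$ can be made arbitrarily close to $y$ while the total shape volume exceeds $a\vol(\Delta)$.

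\textbf{Main obstacle.} I expect the technical heart to be the final closure step. The map $y \mapsto \tau_y$ is a priori only lower semi-continuous; for a boundary point $y$ of $\Delta(a)$ that sits near $\partial\Delta$, an extremal shape at $y$ may already occupy essentially all nearby room inside $\Delta$, potentially obstructing the naive perturbation. Making the "add-a-cube and re-center" argument uniform over such configurations --- or, alternatively, proving directly that $\{y : \tau_y > a\vol(\Delta)\}$ is dense in $\Delta(a)$ via an explicit structural description of extremal shapes (as suggested by the hyperbolic-plus-linear pieces of $\partial\Delta(\tfrac{1}{10})$ in the unit-square example) --- is where the bulk of the technical work will live.
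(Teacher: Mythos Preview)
Your proposal is correct and follows the same route as the paper's proof: upper bound via the Koszul complex and Lemma~\ref{upper bound}, lower bound via Propositions~\ref{big W}, \ref{weights} and Lemma~\ref{dense in a ball}. The paper is in fact terser than you and does not isolate the boundary case $\tau_y = a\vol(\Delta)$ at all.

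One remark on the obstacle you flag: it is smaller than you fear, because the add-a-cube perturbation does not require room \emph{near} $y$. Given a shape $S$ centered at $y$ with $\vol(S) > a\vol(\Delta) - \delta^2$, one has $\vol(\Delta\setminus S) \geq (1-a)\vol(\Delta) > 0$, a quantity independent of $\delta$; since $S$ is closed, $\Delta\setminus S$ always contains a finite cube-union $C$ of total volume~$\delta$. Then $S' = S\cup C$ satisfies $\vol(S') > a\vol(\Delta)$ while its center of mass $y'$ obeys $|y'-y| \leq \mathrm{diam}(\Delta)\cdot \delta/\vol(S') = O(\delta)$. This already shows that $\{y:\tau_y > a\vol(\Delta)\}$ is dense in $\Delta(a)$ for $a<1$ (the case $a=1$ reduces to the single centroid), so no structural description of extremal shapes is needed.
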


\begin{proof}
The closure 
$\Delta(a,b) \supseteq \left \{x \in \Delta \  \bigg| \  \frac{\tau_x}{\vol(\Delta)} \geq a \right \}$ follows from a similar argument as in Lemma \ref{dense} replacing a sphere with shapes supported at $x$ with volumes arbitrarily close to $\tau_x$. For sequences asymptotically small, we can pick points in a sphere around $x$. For sequences asympotically close to $\tau_x$, we can pick points from finite cube unions (possibly further scaled down) approximating $\tau_x$.

As is well known (first in \cite{G} (0.1)-(0.4)), $K_{p,q}(\pp;L_d)$ can be computed as cohomology in the middle of the following short complex:
\begin{align*}
\wedge^{p+1}H^0(\pp,L_d) \otimes H^0(\pp, (q-1)L_d) \ra \wedge^{p}H^0(\pp,L_d) \otimes H^0(\pp, q L_d) \\
\ra \wedge^{p-1}H^0(\pp,L_d) \otimes H^0(\pp, (q+1)L_d)
\end{align*}
Hence all weights of $K_{p,q}(\pp,L_d)$ correspond to nonrepetitive sums of points in $(p+q)\Delta$. The length of the above Koszul complex is $h^0(\pp,L_d) = \vol(\Delta) \cdotp d^n + O(d^{n-1})$. $q$ is bounded, so:
$$\overline{\bigcup_{\substack{d \  \gg \  0 \\ a \cdot r_d \ \leq \  p \ \leq \  b \cdot  r_d}} \mathrm{wts^{nor}}(\wedge^{p+1}H^0(\pp,L_d) \otimes H^0(\pp, (q-1)L_d))}$$
$$= \overline{\bigcup_{\substack{d \  \gg \  0 \\ a \cdot r_d \ \leq \  p \ \leq \  b \cdot  r_d}} \mathrm{wts^{nor}}(\wedge^{p+1}H^0(\pp,L_d))}.$$
and similarly for the other terms in the Koszul complex. 
By Lemma \ref{upper bound}, if a normalized weight is in 
$$\overline{\mathrm{wts}^{\mathrm{nor}}(\wedge^{p_d}H^0(\pp,L_d))},$$
then it satisfies 
$$\tau_x \geq \mathrm{limsup}_{d \ra \infty} \frac{p_d}{d^n}$$
We are computing for the range $[a \vol(\Delta)d^n, b \vol(\Delta)d^n]$, hence 
$$\mathrm{limsup}_{d \ra \infty} \frac{p_d}{d^n} \geq b \vol(\Delta).$$
so $$x \geq a \vol(\Delta).$$
Then by the definition of $K_{p,q}(\pp,L_d)$ above: 
\begin{align*}
\ol{\mathrm{wts}^\mathrm{nor} (K_{p,q}(\pp,L_d))} 
 & \subseteq \ol{\mathrm{wts}^\mathrm{nor}(\wedge^pH^0(\pp,L_d) \otimes H^0(\pp,qL_d))} \subseteq \left \{x \in \Delta \  \bigg| \  \frac{\tau_x}{\vol(\Delta)} \geq a \right \}
\end{align*}
which gives us the other inclusion.
\end{proof}

\section{Boundary of $\Delta(a)$}

In this section, we describe the boundary of $\Delta(a)$.

Fix a convex body $\Delta \subset \R^n$ with a nice (eg. piecewise polynomial) boundary, and a constant $a$ in $[0,1]$. Let $v$ be a unit vector in $\R^n$. Then $H(v,c) := \{ x \in \R^n | v.x \leq c \}$ forms a family of parallel half spaces. There is a unique constant $c$ so that:
$$\vol(\Delta \cap H(v,c)) = a \vol(\Delta).$$
Call this constant $c_v$. Let $x_v$ be the center of gravity of $\Delta \cap H(v,c_v)$. 

\begin{prop}\label{boundary}
The points $x_v$, as v ranges over all unit vectors, form the boundary of $\Delta(a)$. 
\end{prop}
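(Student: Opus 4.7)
The plan is to show that $\Delta(a)$ is a convex body and that the point $x_v$ is the unique extreme point of $\Delta(a)$ in the direction $-v$; standard convex analysis will then identify $\partial\Delta(a)$ with the set $\{x_v\}$ as $v$ ranges over the unit sphere.

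As a preliminary step, I would observe that the supremum defining $\tau_x$ over shapes (finite cube unions) agrees with the supremum over arbitrary measurable $S \subset \Delta$ with $\mathrm{cm}(S)=x$: any measurable set can be approximated in measure by a finite cube union, and the center of mass can be corrected to $x$ exactly by appending or removing a small cube at an appropriate location. Thus
\[
\Delta(a) \ = \ \{\mathrm{cm}(S) : S \subset \Delta \text{ measurable},\ \vol(S) \geq a\,\vol(\Delta)\}.
\]
Convexity of $\Delta(a)$ then follows from a relaxed-indicator argument. Given $x_1,x_2 \in \Delta(a)$, pick $f_i : \Delta \to [0,1]$ with $\int f_i = a\,\vol(\Delta)$ and $\int y f_i\,dy = x_i \int f_i$ (scale $\mathbf{1}_{S_i}$ down by $a\,\vol(\Delta)/\vol(S_i)$ when the latter exceeds 1). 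The convex combination $\lambda f_1 + (1-\lambda)f_2$ still lies in $[0,1]$ with integral $a\,\vol(\Delta)$ and center of mass $\lambda x_1 + (1-\lambda)x_2$, and Lyapunov's convexity theorem realises it as the indicator of a measurable set with the same first two moments.

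The heart of the proof is computing the support function of $\Delta(a)$. For a unit vector $u$,
\[
\phi(u) \ = \ \max_{x \in \Delta(a)} u \cdot x \ = \ \max_{S\subset\Delta,\ \vol(S) \geq a\,\vol(\Delta)} \frac{1}{\vol(S)}\int_S u \cdot y\,dy.
\]
By the bath-tub principle, at fixed volume $V$ the numerator is maximised by the top slice $S^{*}_{V} = \Delta \cap \{y : u\cdot y \geq \alpha_V\}$, so the ratio equals the conditional expectation $\mathbb{E}[u \cdot Y \mid u\cdot Y \geq \alpha_V]$ for $Y$ uniform on $\Delta$, which is strictly decreasing in $V$. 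Hence the outer maximum is attained precisely at $V = a\,\vol(\Delta)$, by $S^{*}_V = \Delta \cap H(-u, c_{-u})$ in the paper's notation, with $\mathrm{cm}(S^{*}_V) = x_{-u}$. Moreover the maximiser is unique up to null sets, since transferring mass from a positive-measure region where $u \cdot y$ is smaller to one where it is larger strictly increases the ratio.

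Combining the pieces: each $x_v$ is the unique maximiser of $-v \cdot x'$ over $\Delta(a)$, so $x_v \in \partial\Delta(a)$; conversely, any $x \in \partial\Delta(a)$ admits a supporting hyperplane with outer normal $u$ by convexity, and the uniqueness above forces $x = x_{-u} = x_v$ for $v = -u$. The step I expect to be the main obstacle is the uniqueness claim in the support-function computation, which is what rules out flat faces on $\partial\Delta(a)$; it rests on the fact that the affine function $y \mapsto u \cdot y$ cannot be constant on any positive-measure subset of the full-dimensional $\Delta$. The convexity step is also slightly subtle, since naive Minkowski-sum averaging of indicator sets does not respect centers of mass, which is why one must pass through relaxed indicators and invoke Lyapunov's theorem.
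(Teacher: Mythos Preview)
The paper does not actually prove this proposition: the Remark immediately following it states explicitly that the proof is skipped, offering only the water-tilting heuristic (lean $\Delta$ so that $v$ points down; the water of volume $a\,\vol(\Delta)$ settles into $\Delta\cap H(v,c_v)$ with center of mass $x_v$, which is then the extreme point of $\Delta(a)$ in direction $v$). Your proposal is a correct formalization of exactly this picture. The bath-tub principle is the rigorous substitute for ``water flows downward,'' and your support-function computation makes precise the assertion that $x_v$ is the unique extreme point of $\Delta(a)$ in the direction $-v$. The two ingredients you add beyond the paper's intuition --- convexity of $\Delta(a)$ via relaxed indicators and Lyapunov's theorem, and uniqueness of the bath-tub optimiser to exclude flat faces --- are precisely what is required to upgrade the heuristic to a proof, and both are handled correctly. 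One small point you leave implicit is that the supremum defining $\tau_x$ is actually attained (so that $\Delta(a)$ genuinely equals $\{\mathrm{cm}(S):\vol(S)\ge a\,\vol(\Delta)\}$ rather than a closure thereof); this follows from the same weak-$*$ compactness plus Lyapunov argument you already invoke for convexity, so it is not a real gap.
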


\begin{rmk}
We give the intuition but skip the proof of the proposition. Think of $\Delta$ as a container holding water of volume $a$. When one leans it to the direction with $v$ pointing downward, water flows to lower its center of mass (potential energy). In this position, the center of mass, call it $x_v$, is the extreme of $\Delta$ in direction $v$, hence a boundary point of $\Delta(a)$. The water level in this setting corresponds to the boundary of $H(v,c_v)$ and we can actually see that the tangent of the boundary is the hyperplane perpendicular to $v$ through $x_v$. 
\end{rmk}

\begin{exap}
In the example given in the introduction, when $\Delta$ is the unit square and $a = \frac{1}{10}$. The interested reader can work out that when the water surface divides the square into a triangle and a pentagon, the center of mass of the water is parametrized by 
$$(1- \frac{1}{15k}, \frac{1}{3}k) \ \mathrm{for} \ \frac{1}{5} \leq k \leq 1$$
and its symmetric images, these account for the 4 hyperbola segments. 
When the water surface divides the unit square into two trapezoids, the center of mass lies on:
$$(\frac{2}{3}  - \frac{5}{3} k, \frac{29}{30} - \frac{1}{6} k)\ \mathrm{for} \ \frac{1}{10} \leq k \leq \frac{1}{5}$$ 
or its symmetric images. These account for the 8 line segments. 
\end{exap}

\end{document}